\newcommand{\todo}[1]{{\color{darkblue}{#1}}}
\definecolor{darkred}{rgb}{.7,0,0}
\definecolor{darkgreen}{rgb}{0.4,0.7,0}
\definecolor{darkblue}{rgb}{0,0,0.7}
\pgfplotsset{compat=newest}
\newtheorem{assumptions}[theorem]{Assumption}
\newtheorem{remark}[theorem]{Remark}
\numberwithin{equation}{section}
\begin{document}

\title{Long-time Asymptotics of the Filtering Distribution for Partially
Observed Chaotic Dynamical Systems}

\newcommand*\samethanks[1][\value{footnote}]{\footnotemark[#1]}
\author{Daniel Sanz-Alonso 
\thanks{Mathematics Institute, Warwick University, Coventry CV4 7AL, UK.}
\and Andrew M. Stuart  \samethanks
}
\maketitle

\begin{abstract}
The filtering distribution is a time-evolving probability distribution on
the state of a dynamical system, given noisy observations. We
study the large-time asymptotics of this probability distribution for discrete-time, randomly initialized signals that evolve according to 
a deterministic map $\Psi$. The observations are assumed to comprise a low-dimensional
projection of the signal, given by an operator $P$, subject to additive noise. We 
address the question of whether these observations contain sufficient information to 
accurately reconstruct the signal. In a general framework, we establish conditions 
on $\Psi$ and $P$ under which the filtering distributions concentrate around the signal 
in the small-noise, long-time asymptotic regime. Linear systems, the Lorenz '63
and '96 models, and the Navier Stokes equation on a two-dimensional torus are 
within the scope of the theory. Our main findings come as a by-product of computable 
bounds, of independent interest, for suboptimal filters based on new variants of 
the 3DVAR filtering algorithm.  
\end{abstract}

\begin{keywords} Filtering, small noise, chaos, dissipative dynamical systems, Lorenz models, 
Navier Stokes equation \end{keywords}

\section{Introduction}
The evolution of many physical systems can be successfully modelled by a deterministic dynamical system for which the initial conditions may not be known exactly. In the presence of chaos, uncertainty in the initial conditions will be
dramatically amplified even in short time-intervals. However, when observations of 
the system are available, they may be used to ameliorate this growth in uncertainty 
and potentially lead to accurate estimates of the state of the system. In this work we provide sufficient conditions on the observations of a wide class of dissipative chaotic differential equations that guarantee long-time accuracy of the estimated state variables. The equations covered by our theory include the Lorenz '63 and '96 models as well as the Navier Stokes equation on a two-dimensional torus.
The importance of these model problems within geophysical applications is highlighted in \cite{majda2006nonlinear},
and their use for testing the efficacy of filtering algorithms is exemplified in \cite{majda2012filtering,law2012evaluating}.

 It is often natural to acknowledge the uncertainty on the initial condition by viewing it as a probability distribution which is propagated by the dynamics. Whenever a new observation of the state variables becomes available, this distribution is updated to incorporate it, reducing uncertainty. This process is performed sequentially in what is known as filtering \cite{crisan2011oxford}. Unfortunately, in almost all situations of applied relevance ---with the exception of finite state signals and the linear Gaussian case--- the analytical expression for these {\em filtering distributions} involves integrals that cannot be computed in closed form. It is thus necessary to employ a numerical algorithm to sequentially approximate the filtering distributions. In order to develop good algorithms a thorough understanding of the properties of these distributions is desirable. The interplay between properties of the filtering distributions and those of their numerical approximations is perhaps best exemplified by the case of filter stability and particle filtering: the long-time behaviour of particle filtering algorithms depends crucially on the sensitivity of filtering distributions to their initial condition \cite{del2004feynman}, \cite{crisan2008stability}, \cite{lei2013convergence}.
The main result of this paper shows long-time concentration of the filtering distributions towards the true underlying signal for partially observed chaotic dynamics. The proofs combine the asymptotic boundedness of a new suboptimal filter with the mean-square optimality of the mean of the filtering distribution as an estimator of the signal  \cite{williams1991probability}. All our examples rely on synchronization properties of dynamical systems. This tool underlies the study of noise-free data assimilation initiated in \cite{hayden2011discrete} for the Lorenz '63 and the Navier Stokes equation. The paper \cite{hayden2011discrete}  motivated studies of the 3DVAR filter (three-dimensional variational method) from meteorology \cite{lorenc1986analysis,parrish1992national} for a variety of dissipative chaotic dynamical systems, conditioned on noisy observations, in \cite{brett2012accuracy} (Navier Stokes), \cite{law2012analysis} (Lorenz '63) and \cite{sanz} (Lorenz '96). Here we study the filtering distribution itself, using modifications of the 3DVAR filter which exploit dissipativity to obtain upper bounds on the true filtering error. We also provide a unified methodology for the analysis. Furthermore, whereas previous work in \cite{brett2012accuracy}, \cite{sanz} required the observation noise to have bounded support, here only finite variance is assumed.

The suboptimal modified 3DVAR filter that we use in our analysis can also be interpreted using ideas from nonlinear observer theory \cite{thau1973observing}, \cite{tarn1976observers}.  Its asymptotic boundedness is proved by a Lyapunov-type argument. Although more sophisticated suboptimal filters could be used to gain insight on the filtering distributions, our choice of modified nonlinear observers is particularly  well-suited to deal with high (possibly infinite) dimensional signals, as indicated by the
fact that the theory includes the Navier-Stokes equation. Filtering in high dimensions is not, in general, well-understood. For example, the question of whether some form of particle filtering could be robust with respect to dimension has received much recent attention \cite{snyder2008obstacles}, \cite{rebeschini2013can}, \cite{beskos2014stability}. By understanding properties of the filtering
distribution in high and infinite dimensions we provide insight that may inform
future development of particle filters.

The paper is organized as follows. In Section  \ref{setupsection} we set up the notation and formulate the questions we address in the rest of the paper. Section \ref{suboptimalfilterssection}  reviews the 3DVAR algorithm from data assimilation and its relation to more general nonlinear observers from the control theory literature. A new truncated nonlinear observer is also introduced. In Section \ref{analysisnonlinearobservers} we prove long-time asymptotic results for these suboptimal filters, and thereby deduce long-time accuracy of the filtering distributions. Section \ref{sec:Application-to-relevant} contains some applications to relevant models and we close in Section \ref{conclusions}.

\section{Set-up}\label{setupsection}

Filtering problems are naturally formulated within the framework of
Hidden Markov Models. The general setting that we consider is that
of a Markov chain $\{v_{j},y_{j}\}_{j\ge0},$ where $\{v_{j}\}_{j\ge0}$
is the {\em signal} process, and $\{y_{j}\}_{j\ge0}$ is the \emph{observation}
process. We assume throughout $y_{0}=0$ so that $y_{0}$ gives no
information on the initial value of the signal and that, for each
$j\ge1,$ $y_{j}$ is a noisy observation of $v_{j}.$ We are interested
in the value of the signal, but have access only to outcomes of the
observation process. We suppose that both take values in a separable
Hilbert space ${\cal H}=({\cal H},\langle\cdot,\cdot\rangle,|\cdot|)$
and that the signal is randomly initialized with distribution $\mu_{0},$
$v_{0}\sim\mu_{0}.$ We assume further that there is a \emph{deterministic}
map $\Psi$ such that

\begin{equation}
v_{j+1}=\Psi(v_{j}),\quad{\text{for}}\, j\ge0,\label{eq:deterministicevolution}
\end{equation}
and therefore all the randomness in the signal comes from its initialization. 

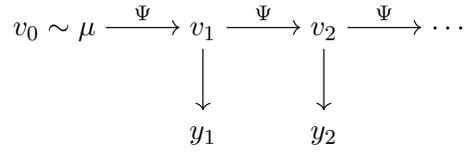
\begin{figure}\begin{center}\begin{tikzcd}  v_0\sim \mu\arrow{r}{\Psi} &v_1\arrow{r}{\Psi}\arrow{d}  &v_2\arrow{r}{\Psi}\arrow{d} &\cdots\\ &y_1&y_2 \end{tikzcd}\end{center}\caption[]{{\em Graphic representation of the dependence structure assumed throughout this paper. Conditional on $v_0,\ldots,v_j,$ the distribution of $v_{j+1}$ is completely determined by $v_j$ via a deterministic map $\Psi;$ therefore the signal process forms a Markov chain. Similarly, conditional on $\{v_j\}_{j\ge 0},$ $\{y_j\}_{j\ge 1}$ is a sequence of independent random variables such that the conditional distribution of $y_j$ depends only on $v_j.$  }}  \end{figure}The
observation process is given by 
\begin{equation}
y_{j}=Pv_{j}+\epsilon w_{j},\quad\text{for }j\ge1,\label{eq:observationmodel}
\end{equation}
where $P$ denotes some linear operator that projects the signal onto
a proper subspace of ${\cal H},$ $\{w_{j}\}_{j\ge1}$ is an i.i.d.
noise sequence ---independent of $v_{0}$--- and $\epsilon>0$ quantifies
the strength of the noise. We define $Q=I-P$.
For mathematical convenience, and contrary
to usual convention, we see both observations and noise as taking
values in the same space ${\cal H}$ as the signal, with the standing
assumptions $Qy_{j}=0,$ $Qw_{1}=0$ and $Pw_{1}=w_{1}$ a.s.%
\footnote{More generally, when an operator $T:{\cal H}\to{\cal H}$ acts on
the observations it should be implicitly understood that $T:{\cal H}\to{\cal H}$
satisfies $T=PT.$ Moreover it will often be assumed that $T\big|_{P}:P{\cal H}\to P{\cal H}$
is positive definite and then the operator $T^{-1}:{\cal H}\to{\cal H}$
should be interpreted as satisfying $PT^{-1}=T^{-1}\big|_{P},\,\,QT^{-1}\equiv0.$ %
} Thus $Q$ is a projection operator onto the unobserved
part of the system. For $j\ge0,$ we let $Y_{j}:=\sigma(y_{i},\, i\le j)$
be the $\sigma$--algebra generated by the observations up to the
discrete time $j.$

Note that the law of $\{v_{j},y_{j}\}_{j\ge0}$ is completely determined
by four elements: the law of $v_{0},$ the map $\Psi,$ the law of
$w_{1}$ and the observation operator $P$. We
will denote by $\mathbb{P}$ the law of $\{v_{j},y_{j}\}_{j\ge0}$
and by $\mathbb{E}$ the corresponding expectation. It will be assumed
throughout that $\mathbb{E}|v_{0}|^{2}<\infty$ and that the observation
noise satisfies $\mathbb{E}w_{1}=0$ and $\mathbb{E}|w_{1}|^{2}<\infty.$
For convenience and without loss of generality we normalize the latter
so that $\mathbb{E}|w_{1}|^{2}=1.$ 

The main object of interest in filtering theory are the conditional
distributions of the signal at discrete time $j\ge1$ given all observations
up to time $j.$ These are known as \emph{filtering distributions}
and will be denoted by 
\[
\mu_{j}(\cdot):=\mathbb{P}\big[v_{j}\in\cdot|Y_{j}\big].
\]

The mean $\widehat{v}_{j}$ of the filtering distribution
$\mu_{j}$ is known as the \emph{optimal filter} 
\[
\widehat{v}_{j}:=\mathbb{E}[v_{j}|Y_{j}]=\int_{\mathcal H} v\, \mu_j(dv).
\]
By the mean-square minimization property of the conditional expectation
\cite{williams1991probability}, this filter is optimal in the sense
that, among all $Y_{j}$-measurable random variables, it is the only
one ---up to equivalence--- that minimizes the $L^{2}$ distance to
the signal $v_{j}$: 
\begin{equation}
\mathbb{E}|v_{j}-\widehat{v}_{j}|^{2}\le\mathbb{E}|v_{j}-z_{j}|^{2},\quad{\rm for\, all\,}Y_{j}{\rm \text{{--measurable}}}\, z_{j}.\label{eq:optimalityproperty}
\end{equation}
In words, $\widehat{v}_{j}$ is the best possible estimator (in the
mean-square sense) of the state of the signal at time $j$ given information
up to time $j.$ The optimal filter is usually, like the filtering
distributions, not analytically available.
 However, by studying suitable
suboptimal filters $\{z_{j}\}_{j\ge0}$ and using (\ref{eq:optimalityproperty}) we can we can find sufficient conditions under which the optimal filter 
is close to the signal in the long-time horizon. We thus provide sufficient 
conditions under which the observations counteract the potentially chaotic 
behaviour of the dynamical system, and allow predictability on infinite 
time-horizons.

The main objective of this paper is to investigate the long-time asymptotic
behaviour of the filtering distribution for discrete-time chaotic
signals, arising from the solution to a dissipative quadratic system
with energy-conserving nonlinearity 
\begin{equation}
\frac{dv}{dt}+Av+B(v,v)=f,\label{generalform}
\end{equation}
which is observed at discrete times $t_{j}=jh,$ $j\ge1,$ $h>0.$
The bilinear form $B(\cdot,\cdot)$ will be assumed throughout to
be symmetric. We denote by $\Psi_{t}$ the one-parameter solution
semigroup associated with $\eqref{generalform}$, i.e. for $v_{0}\in{\cal H},$
$\Psi_{t}(v_{0})$ is the value at time $t$ of the solution to (\ref{generalform})
with initial condition $v_{0}.$ Furthermore we introduce the abbreviation
$\Psi=\Psi_{h}.$ 

Our theory ---developed in Section 4--- relies on two assumptions
that we now state and explain.

\begin{assumptions}\label{Assumption 1}\ \\
1.  ({\bf Absorbing ball property}.) There are constants $r_0, r_1>0$ such that 
\begin{equation}
|\Psi_{t}(v_{0})|^{2}\le\exp(-r_{1}t)|v_{0}|^{2}+r_{0}\big(1-\exp(-r_{1}t)\big),\quad\quad t\ge0.\label{eq:psicontractsoutsideball}
\end{equation}
Therefore, setting $r=\sqrt{2r_0},$ the ball ${\cal B}:=\{u\in{\cal H}: |u|\le r\}$ is absorbing and forward invariant for the dynamical system  \eqref{eq:deterministicevolution}.\\
2. \label{Squeezingpropertyassumption} ({\bf Squeezing property}.) There is a function $V:\,{\cal H}\to [0,\infty)$ such that $V(\cdot)^{1/2}$ is a Hilbert norm equivalent to $|\cdot|,$ a bounded operator $D,$ an absorbing set ${\cal B}_V=\{u\in{\cal H}: V(u)^{1/2}\le R\}\supset \cal B,$ and a constant $\alpha\in(0,1)$ such that, for all $u\in {\cal B},\, v\in {\cal B}_V,$\label{enu:scontraction}
\[
V\Big((I-DP)\big(\Psi(v)-\Psi(u)\big)\Big)\le\alpha V(v-u).
\]
\end{assumptions}

The absorbing ball property  concerns only the signal dynamics. It is satisfied by many dissipative models of the form (\ref{generalform})
---see Section 5. The squeezing property involves both the signal dynamics and the observation
operator $P.$ It is satisfied by several problems of interest
provided that the assimilation time $h$ is sufficiently small and
that the `right' parts of the system are observed; see again Section
5 for examples. We remark that several forms of the squeezing property can be found in the dissipative dynamical systems literature. They all refer to the existence of a contracting part of the dynamics. Their importance for filtering has been explored in \cite{hayden2011discrete}, \cite{brett2012accuracy} and \cite{chueshov2014squeezing}. It also underlies the analysis in \cite{KLS13} and \cite{sanz}, as we make apparent here. We have formulated the squeezing property to suit our analyses and with the intention of highlighting the similar role that it plays to detectability for linear problems, as explained in Subsection \ref{globalresults}. The function $V$ will represent a Lyapunov type function in Section \ref{analysisnonlinearobservers}. 
For all the chaotic examples in Section \ref{sec:Application-to-relevant} the operator $D$ will be chosen as the identity, but other choices are possible. As we shall see, the absorbing ball property is not required when a global form of the squeezing property, as may arise for linear problems, is satisfied.

We will construct suboptimal filters $\{m_j\}_{j\ge 0}$ that are forced to lie in $ {\cal B}_V.$ By the absorbing ball property the signal $v_j$ is contained, for large $j$ and with high probability, in the forward-invariant ball ${\cal B}.$ Therefore, intuitively, the squeezing property can be applied, for large $j,$ to $m_j\in  {\cal B}_V,\, v_j\in  {\cal B}$.

The main result of the paper, Theorem \ref{generaltheorem}, shows
that, when Assumption \ref{Assumption 1} 
holds, the optimal filter accurately tracks the signal. Specifically we show that there is a constant $c>0,$ independent
of the noise strength $\epsilon,$ such that 
\begin{equation}
\limsup_{j\to\infty}\mathbb{E}|v_{j}-\widehat{v}_{j}|^{2}\le c\epsilon^{2}.\label{eq:typicalresult}
\end{equation}
Note that (\ref{eq:typicalresult}) not only guarantees that in the low noise regime
the optimal filter (i.e.$ $ the mean of the filtering distribution)
is ---on average--- close to the signal, but also that the variance
of the filtering distribution is ---on average--- small.
%of the size of the observation noise in the long-time asymptotic. 
Indeed, since
\[
{\rm var}[v_{j}\big|Y_{j}]=\mathbb{E}\bigg[(v_{j}-\widehat{v}_{j})\otimes(v_{j}-\widehat{v}_{j})\bigg|Y_{j}\bigg]
\]
it follows using linearity of the trace operator that 
\[
{\rm Trace}\,\mathbb{E}\,{\rm var}[v_{j}\big|Y_{j}]=\mathbb{E}|v_{j}-\widehat{v}_{j}|^{2},
\]
and therefore (\ref{eq:typicalresult}) implies
\[
\limsup_{j\to\infty}{\rm Trace}\,\mathbb{E}\,{\rm var}[v_{j}\big|Y_{j}]\le c\epsilon^{2}.
\]
We hence see that (\ref{eq:typicalresult}) guarantees that the variance of the filtering distributions scales as the size of the observation noise, like ${\mathcal O}(\epsilon^2).$ Thus the initial uncertainty in the initial condition which is ${\mathcal O}(1)$ is reduced, in the large-time asymptotic, to uncertainty of ${\mathcal O}(\epsilon)$: the observations have overcome the effect of chaos.

\section{Suboptimal Filters}\label{suboptimalfilterssection}

The aim of this section is to introduce a suboptimal filter, designed to track dynamics satisfying Assumption \ref{Assumption 1}. This filter is based on the 3DVAR algorithm from data assimilation, and nonlinear observers from control applications. We give the necessary background on these in Subsection \ref{3dvarsection} before introducing the new filter in Subsection \ref{truncatednonlinearobserversection}.
 
\subsection{3DVAR Filter}\label{3dvarsection}

The 3DVAR filter approximates the filtering distribution $\mu_{j+1}$
by a Gaussian $N(z_{j+1},C)$  whose mean can be found recursively
starting from a deterministic point $z_{0}\in{\cal H}$ by solving
the variational problem 
\begin{equation}
z_{j+1}:=\text{{argmin}}_{z}\left\{ \frac{1}{2}\left|C_{\sharp}^{-1/2}\bigl(z-\Psi(z_{j})\bigr)\right|^{2}+\frac{1}{2\epsilon^{2}}\left|\Gamma^{-1/2}(y_{j+1}-Pz)\right|^{2}\right\} ,\label{3dvardefinition}
\end{equation}
where $C_{\sharp}$ is a fixed model covariance that represents the
lack of confidence in the model $\Psi,$ and $\Gamma$ is the covariance
operator of the observation noise $w_{1}.$ 

The covariance $C$ of the 3DVAR filter is determined by the Kalman update formula 
\[
C^{-1}=C_{\sharp}^{-1}+P^{T}\Gamma^{-1}P.
\]

It is immediate from (\ref{3dvardefinition}) that $z_{j}$ is $Y_{j}$-measurable
for all $j\ge0,$ and it can be shown \cite{dataassimilationbook}  that the solution
$z_{j+1}$ to this variational problem satisfies 
\begin{equation}
z_{j+1}=(I-KP)\Psi(z_{j})+Ky_{j+1},\label{update3dvar}
\end{equation}
where $K$ is the Kalman gain
\[
K=C_{\sharp}P^{T}(PC_{\sharp}P^{T}+\epsilon^{2}\Gamma)^{-1}.
\]

The 3DVAR filter was introduced, and has been widely applied, in the
meteorological sciences \cite{parrish1992national,lorenc1986analysis}.
Long-time asymptotic stability and accuracy properties ---that guarantee
that the means $z_{j}$ become close to the signal $v_{j}$--- have
recently been studied for the Lorenz '63 model \cite{law2012analysis} subject to additive Gaussian noise, and the Lorenz '96 and Navier-Stokes equation observed subject to bounded noise \cite{sanz}, \cite{brett2012accuracy}. 

 It will be convenient to allow for other choices of operator $K$
in the above definition, and consider the more general recursion 
\begin{equation}
z_{j+1}=(I-DP)\Psi(z_{j})+Dy_{j+1},\label{eq:general3dvar}
\end{equation}
where $D$ is some linear operator that we are free to choose as desired. Filters of the form \eqref{eq:general3dvar} are known as nonlinear observers \cite{thau1973observing}, \cite{tarn1976observers}. 
The 3DVAR filter can be seen as an instance of these where the operator $D$ is is determined by model and noise covariances, and by the observation 
operator.
We now derive a recursive formula for the error made by nonlinear observers when approximating the signal. To that end note, firstly, that the signal $\{v_j\}_{j\ge 0}$ satisfies
$$v_{j+1}=(I-DP)\Psi(v_j) + DP \Psi(v_j).$$
Secondly, using \eqref{eq:observationmodel} at time $j+1,$ combined with the assumption that $Pw_{j+1}=w_{j+1},$ 
$$z_{j+1}=(I-DP)\Psi(z_j)+DP\Psi(v_j)+\epsilon DPw_{j+1}$$
Therefore, substracting the previous two equations, we obtain that the error $\delta_j:=v_j-z_j$ satisfies
\begin{equation}\label{erroreq}
\delta_{j+1} = (I-DP)\bigl(\Psi(v_j)-\Psi(z_j)\bigr) - \epsilon DPw_{j+1}.
\end{equation}
Despite their simplicity nonlinear observers are known to accurately track the signal under 
suitable conditions \cite{tarn1976observers,thau1973observing}.  Equation \eqref{erroreq} plays a central
role in such analysis, and will underlie our analysis too. It demonstrates
the importance of the operator $(I-DP)\Psi$ in the propagation of error;
this operator combines the properties of the dynamical system,
encoded in $\Psi$, with the properties of the observation operator $P$.

\begin{comment}
This idea is usually mathematically encoded in the context of the 3DVAR filter in a parameter $\eta$ that quantifies the
relative size of the model and data covariances.
Smaller values of $\eta$ correspond to giving greater weight to the
observations, and decreasing $\eta$ is referred to as (model) variance
inflation. 
\end{comment}

\subsection{Nonlinear Observers and Truncated Nonlinear Observers}\label{truncatednonlinearobserversection}
In the remainder of this section we introduce a truncated nonlinear observer 
that is especially tailored to exploit the absorbing ball property
of the underlying dynamics. 

\begin{comment}
\todo{The shift to finite dimensions in this paragraph, which 
you then leave behind (but only implicitly) to define the
truncated filter, is worded confusingly.}
Suppose that $P|_{P\cal H}=I$ and choose $\Gamma=P,$ $C_{\sharp}=\sigma^{2}I.$ Then (\ref{update3dvar}) can be rewritten as 
\begin{equation}
z_{j+1}=S\Psi(z_{j})+(I-S)y_{j+1},\label{eq:3dvardef}
\end{equation}
 where 
\begin{equation}
S=S(\eta)=\frac{\eta}{1+\eta}P+Q,\label{eq:sdefinitionfor6396}
\end{equation}
and $\eta=\epsilon^{2}/\sigma^{2}$ is the variance inflation parameter
discussed in Section \ref{3dvarsection}. In general, we take \eqref{eq:3dvardef} and \eqref{eq:sdefinitionfor6396} as definitions of a filter $\{z_j\}_{j\ge 0}.$
\end{comment}
Given a non-empty closed convex subset ${\cal C}\subset{\cal H},$ take $m_{0}\in{\cal C}$
and, for $j\ge0,$ define the \emph{${\cal {\cal C}}$-truncated nonlinear observer} $m_{j+1}$ by 

\begin{equation}
m_{j+1}:=P_{{\cal C}}\Bigl((I-DP)\Psi(m_{j})+Dy_{j+1}\Bigr),\label{eq:truncated3dvardef}
\end{equation}
where $P_{{\cal C}}$ is the orthogonal (with respect to a suitable inner product) projection operator onto the
set ${\cal C}$; this is well-defined for any non-empty closed convex set  \cite{rudin1987real}. In the next section we will analyze the long-time
behaviour of this filter when ${\cal C}$ is chosen as ${\cal B}_V$ and the inner product is the one induced by $V^{1/2}$
(see Assumption \ref{Assumption 1}). 
The main advantage of this truncated filter is that 
$m_{j}\in {\cal B}_V$ for all $j\ge 0,$ and large
uninformative observations $y_{j}$ corresponding to large realizations
of the observation noise $w_{j}$ will not hinder the performance
of the filter. Examples of other truncated stochastic algorithms can be found in \cite{kushner2003stochastic}.

\section{Stochastic Stability of Suboptimal Filters and Filter Accuracy}\label{analysisnonlinearobservers}
In this section we prove long-time accuracy of certain suboptimal filters under different assumptions on the underlying dynamics and observation model. These results are used to establish long-time concentration of the filtering distributions. We start in Subsection \ref{lyapunovmethod} by recalling the Lyapunov method for proving asymptotic boundedness of stochastic algorithms. In Subsection \ref{globalresults} we employ this method to show asymptotic accuracy of nonlinear observers when a global form of the squeezing property is satisfied, as happens for certain linear problems. Finally, in Subsection \ref{analysischaotic} we use truncated nonlinear observers to deal with chaotic models where only the weaker Assumption \ref{Assumption 1} holds. 
\subsection{The Lyapunov Method for Stability of Stochastic Filters}\label{lyapunovmethod}
Consider a Markov chain $\{\delta_j\}_{j\ge 0}$ and think of it as the random sequence of errors made by some filtering procedure. 
The next result, from  \cite{tarn1976observers}, underlies much of the analysis in the following subsections. 
\begin{lemma}
Let $\delta_j^1$ and $\delta_j^2$ be two realizations of the random variable $\delta_j$  and set $\Delta_j=\delta_j^1-\delta_j^2.$ Suppose that there is a function $V$ such that 
\begin{enumerate}
\item $V(0)=0,\quad \quad V(x)\ge \theta|x|^2$ for all $x\in{\cal H}$ and some $\theta>0.$
\item There are real numbers $K>0$ and $\alpha \in (0,1)$ such
that, for all $\Delta_j\in{\cal H},$  
$$\mathbb{E}[V(\Delta_{j+1})|\Delta_j]\le K+\alpha V(\Delta_j).$$
\end{enumerate}
Then 
\begin{equation*}
\theta\mathbb{E}[|\Delta_j|^2|\Delta_0=a]\le\alpha^j V(a)+K\sum_{i=0}^{j-1}\alpha^i.
\end{equation*}
Therefore,
regardless of the initial state $\Delta_0,$ 
\begin{equation*}
\limsup_{j\to\infty}\mathbb{E}|\Delta_j|^2\le \frac{K}{\theta(1-\alpha)}.
\end{equation*}
\end{lemma}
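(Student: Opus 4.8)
The plan is to iterate the one-step drift inequality in Assumption 2 and then extract the quadratic bound from the lower bound in Assumption 1. To set up the iteration, I would first fix the initial state $\Delta_0=a$ and introduce the conditional quantity $u_j:=\mathbb{E}[V(\Delta_j)\mid\Delta_0=a]$, noting $u_0=V(a)$. The Markov property together with the tower property of conditional expectation gives
\[
u_{j+1}=\mathbb{E}\big[\mathbb{E}[V(\Delta_{j+1})\mid\Delta_j]\,\big|\,\Delta_0=a\big]\le\mathbb{E}\big[K+\alpha V(\Delta_j)\,\big|\,\Delta_0=a\big]=K+\alpha u_j,
\]
where the inequality is exactly Assumption 2 applied inside the outer expectation (valid since that bound holds for every value of $\Delta_j\in{\cal H}$). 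This reduces the problem to the scalar affine recursion $u_{j+1}\le K+\alpha u_j$.

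Next I would solve this recursion by a straightforward induction on $j$. Unrolling $u_{j+1}\le K+\alpha u_j$ yields
\[
u_j\le\alpha^j u_0+K\sum_{i=0}^{j-1}\alpha^i=\alpha^j V(a)+K\sum_{i=0}^{j-1}\alpha^i.
\]
Combining this with the lower bound $V(x)\ge\theta|x|^2$ from Assumption 1, which gives $\theta\,\mathbb{E}[|\Delta_j|^2\mid\Delta_0=a]\le\mathbb{E}[V(\Delta_j)\mid\Delta_0=a]=u_j$, produces precisely the first claimed inequality.

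For the asymptotic statement I would let $j\to\infty$: since $\alpha\in(0,1)$ the transient term $\alpha^j V(a)$ vanishes and the geometric series converges, $\sum_{i=0}^{j-1}\alpha^i\to 1/(1-\alpha)$, so the conditional bound tends to $K/\big(\theta(1-\alpha)\big)$ uniformly in $a$. To obtain the stated unconditional $\limsup$ I would integrate the first inequality against the law of $\Delta_0$, giving $\theta\,\mathbb{E}|\Delta_j|^2\le\alpha^j\,\mathbb{E}[V(\Delta_0)]+K\sum_{i=0}^{j-1}\alpha^i$, and then pass to the limit; this requires only $\mathbb{E}[V(\Delta_0)]<\infty$, which holds under the standing second-moment assumptions since $V^{1/2}$ is equivalent to $|\cdot|$.

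I expect no serious obstacle here, as the argument is a routine Lyapunov-drift computation; the only points deserving care are the justification of the tower-property step (relying on the Markov structure of $\{\Delta_j\}$ and the fact that the drift bound is pointwise in the conditioning variable) and the clean passage from the conditional bound, which is uniform in the initial state $a$, to the unconditional $\limsup$ by integrating out $\Delta_0$.
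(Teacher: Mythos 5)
Your proof is correct, and it is the standard Lyapunov-drift argument that this lemma presupposes: the paper itself states the result without proof, citing Tarn and Rasis, so there is nothing to diverge from. The only imprecision is the claim that the convergence is ``uniformly in $a$'' (the transient $\alpha^j V(a)$ is not uniformly small over unbounded $a$), but this is immaterial since you then correctly pass to the unconditional bound by integrating out $\Delta_0$ under $\mathbb{E}[V(\Delta_0)]<\infty$, and for a fixed initial state the pointwise limit suffices.
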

\subsection{Filter Accuracy with Global Squeezing Property}\label{globalresults}

The following results show that if, for some suitable operator $D,$
the map $(I-DP)\Psi$ satisfies a global Lipschitz condition, then it
is possible to use nonlinear observers to deduce long-time accuracy of the filtering
distributions. Although such a global condition does not typically
hold for dissipative chaotic dynamical systems arising in applications,
the following discussion serves as a motivation for the more general
theory in Subsection \ref{analysischaotic}. Moreover, the results in this subsection are
of interest in their own right. In particular they are enough to deal with the important case
of linear signal dynamics. 

\begin{theorem}\label{globallipschitz}
Assume that there is a Hilbert norm $V(\cdot)^{1/2}$ in ${\cal H},$ equivalent to $|\cdot|,$  and a bounded operator $D$ and constant $\alpha\in(0,1)$ such that 
$$V\Bigl((I-DP)\bigl(\Psi(v)-\Psi(u)\bigr)\Bigr)\le \alpha V(v-u) \quad \forall u,v\in{\cal H}. $$
 Define $\{z_{j}\}_{j\ge0}$ by (\ref{eq:general3dvar}). Then
there is a constant $c>0,$ independent of the noise strength $\epsilon,$
such that 
\[
\limsup_{j\to\infty}\mathbb{E}|v_{j}-z_{j}|^{2}\le c\epsilon^{2}.
\]
\end{theorem}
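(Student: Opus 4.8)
The plan is to apply the Lyapunov Lemma from Subsection \ref{lyapunovmethod} to the error process $\delta_j := v_j - z_j$, using the global squeezing hypothesis to verify the one-step drift condition. The two observations I need to assemble are the error recursion \eqref{erroreq}, namely
\[
\delta_{j+1} = (I-DP)\bigl(\Psi(v_j)-\Psi(z_j)\bigr) - \epsilon DP w_{j+1},
\]
and the facts that $V(\cdot)^{1/2}$ is a Hilbert norm (so it satisfies the parallelogram/inner-product structure), that it is equivalent to $|\cdot|$ (so $V(x)\ge\theta|x|^2$ for some $\theta>0$, giving hypothesis~1 of the Lemma with $V(0)=0$), and that $w_{j+1}$ is independent of $Y_j$ with mean zero and $\mathbb{E}|w_{j+1}|^2 = 1$.

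First I would expand $V(\delta_{j+1})$ using the inner product $\langle\cdot,\cdot\rangle_V$ induced by $V^{1/2}$. Writing $a := (I-DP)\bigl(\Psi(v_j)-\Psi(z_j)\bigr)$ and $b := -\epsilon DP w_{j+1}$, I get
\[
V(\delta_{j+1}) = V(a) + 2\langle a, b\rangle_V + V(b).
\]
Conditioning on $\delta_j$ (equivalently on $v_j$ and $z_j$, both $Y_j$-measurable), the term $a$ is deterministic, while $b$ is linear in the fresh noise $w_{j+1}$, so $\mathbb{E}[\langle a,b\rangle_V\mid \delta_j] = 0$ by mean-zero independence. The global squeezing assumption gives $V(a)\le \alpha V(\delta_j)$ directly, with no localization needed since the bound holds for all $u,v\in{\cal H}$. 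For the noise term, boundedness of $D$ together with norm equivalence yields $\mathbb{E}[V(b)\mid\delta_j] = \epsilon^2\,\mathbb{E}\,V(DP w_{j+1}) \le \epsilon^2 C$ for some finite constant $C$ depending on $\|D\|$ and the equivalence constants, using $\mathbb{E}|w_{j+1}|^2 = 1$. Collecting terms,
\[
\mathbb{E}[V(\delta_{j+1})\mid\delta_j] \le \alpha V(\delta_j) + \epsilon^2 C,
\]
which is exactly hypothesis~2 of the Lemma with $K = \epsilon^2 C$.

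The Lemma then yields $\limsup_{j\to\infty}\mathbb{E}|\delta_j|^2 \le K/\bigl(\theta(1-\alpha)\bigr) = c\epsilon^2$ with $c = C/\bigl(\theta(1-\alpha)\bigr)$ independent of $\epsilon$, which is the claim. A subtle point is that the Lemma as stated concerns the \emph{difference} $\Delta_j$ of two realizations, whereas here I apply it to the single error process $\delta_j$; I would note that the identical drift argument applies verbatim to $\delta_j$, since $\delta_j$ satisfies its own closed recursion \eqref{erroreq} and the same Lyapunov inequality holds for it. I anticipate the only real care is needed in justifying that $\mathbb{E}\,V(DP w_{j+1})$ is finite and controlled by a constant times $\mathbb{E}|w_{j+1}|^2$; this follows from boundedness of $D$, of $P$, and of the norm equivalence, but it is where the finite-variance assumption on the noise (rather than bounded support, as in earlier work) is genuinely used.
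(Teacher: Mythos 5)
Your proposal is correct and follows essentially the same route as the paper: expand $V(\delta_{j+1})$ via the error recursion \eqref{erroreq}, kill the cross term by mean-zero independence of $w_{j+1}$, bound the contraction term by the global squeezing hypothesis and the noise term by $C\epsilon^{2}$ using boundedness of $D$ and norm equivalence, then invoke Lemma 4.1. Your remark about applying the lemma to the single error process $\delta_j$ rather than a difference $\Delta_j$ is a fair observation of a point the paper also passes over silently.
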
 

\begin{proof}
By assumption $V$ satisfies the first condition in Lemma $4.1.$ Set $\delta_j=v_j-z_j.$ Then, using equation \eqref{erroreq} and the independence structure,
\begin{align*}
\mathbb{E}[V(\delta_{j+1})|\delta_j] & =\mathbb{E}\Bigl[V\Bigl((I-DP)\bigl(\Psi(v_{j})-\Psi(z_{j})\bigr)-\epsilon Dw_{j+1}\Bigr)\Big|\delta_j\Bigr]\\
 & =\mathbb{E}\Bigl[V\Bigl((I-DP)\bigl(\Psi(v_{j})-\Psi(z_{j})\bigr)\Bigr)\Big|\delta_j\Bigr]+\epsilon^{2}\mathbb{E}V(Dw_{j+1})\\
 & \le \alpha V(\delta_j)+C\epsilon^{2},
\end{align*}
where $C>0$ is independent of $\epsilon$ and the last inequality follows from the equivalence of norms and the fact that $D$ is bounded. 
Thus the second condition in Lemma $4.1$ holds and the proof is complete.
\end{proof} 

The following corollary is an immediate consequence of the $L^{2}$
optimality property of the optimal filter (\ref{eq:optimalityproperty}).

\begin{corollary}\label{corollarygloballipschitz}
Under the hypothesis of the previous theorem
\[
\limsup_{j\to\infty}\mathbb{E}|v_{j}-\widehat{v}_{j}|^{2}\le c\epsilon^{2}, \quad \quad
\limsup_{j\to\infty}{\rm Trace}\,\mathbb{E}\,{\rm var}[v_{j}\big|Y_{j}]\le c\epsilon^{2}.
\]
\end{corollary}

\begin{comment}
It is noteworthy that in the case $D=P$ the hypotheses of the theorem
very much resemble the squeezing property, but here the contraction
condition is global. Indeed one of the contributions of the theory
introduced in Section 4 is to extend the ideas of this section (without
requiring more restrictive assumptions on the observation noise as
in \cite{brett2012accuracy}) to situations where only the weaker
squeezing property is satisfied.
\end{comment}

In the remainder of this subsection we apply, for the sake of motivation, the previous theorem to the case of linear finite dimensional dynamics. We
take ${\cal H}=\mathbb{R}^{d}$ and let the signal be given by 
\begin{equation}
v_{j+1}=Lv_{j},\quad j\ge1,\quad\quad v_{0}\sim\mu_{0}.\label{eq:lineardynamicsrecursion}
\end{equation}
This framework has been widely studied within the control theory community,
mostly ---but not exclusively--- in the case where both the initial
distribution of the signal and the observation noise are Gaussian.
Other than its modelling appeal, this \emph{linear Gaussian} setting
has the exceptional feature that the filtering distributions are themselves
again Gaussian. Moreover,  their means and covariances can be iteratively
computed using the Kalman filter \cite{kalman1960new}. Since the
optimal filter is the mean of the filtering distribution, the explicit
characterization of the Kalman filter yields an explicit characterization
of the optimal filter. Suppose that, for some given
$\widehat{v}_{0}\in\mathbb{R}^{d}$ and $C_{0}\in\mathbb{R}^{d\times d},$
$\mu_{0}=N(\hat{v}_{0},C_{0})$ and suppose further that $w_{1}\sim N(0,\Gamma).$
Then the filtering distributions are Gaussian, $\mu_{j}=N(\widehat{v}_{j},C_{j}),\, j\ge1,$
and the means and covariances satisfy the recursion 
\begin{align}
\widehat{v}_{j+1} & =(I-K_{j+1}P)L\widehat{v}_{j}+K_{j+1}y_{j+1},\nonumber \\
C_{j+1}^{-1} & =C_{j+1|j}^{-1}+\epsilon^{-2}P^{T}\Gamma^{-1}P,\label{eq:covariancekalmanfilter}
\end{align}
where the \emph{predictive Kalman covariance} $C_{j+1|j}$ and Kalman
gain $K_{j+1}$ are given by 
\begin{align*}
C_{j+1|j} & =LC_{j}L^{T},\\
K_{j+1} & =C_{j+1|j}P^{T}(PC_{j+1|j}P^{T}+\epsilon^{2}\Gamma)^{-1}.
\end{align*}
Similar formulae are available when the covariance operator $\Gamma$
is not invertible in the observation space \cite{dataassimilationbook}. 

\begin{remark}
It is clear from $ $(\ref{eq:covariancekalmanfilter})
that the Kalman filter covariance $C_{j}$, which is the covariance
of the filtering distribution $\mu_{j}$, is deterministic and in
particular does not make use of the observations. It follows from
the discussion in Section 2 that in the linear Gaussian setting 
\[
\limsup_{j\to\infty}\mathbb{E}|v_{j}-\widehat{v}_{j}|^{2}\le c\epsilon^{2}
\]
implies
\[
\limsup_{j\to\infty}{\rm Trace}\,C_{j}\le c\epsilon^{2}.
\]
\end{remark}

In the linear setting the global squeezing property in Theorem \ref{globallipschitz} reduces to the control theory notion of detectability, that we now recall. 
\begin{definition}
The pair $(L,P)$ is called \emph{detectable}  if there exists
a matrix $D$ such that $\rho(L-DP)<1,$ where $\rho(\cdot)$ denotes spectral radius. 
\end{definition} 

We remark that the condition $\rho(L-DP)<1$ guarantees the existence of a Hilbert norm in $\mathbb{R}^d$ in which the linear map defined by the matrix $L-DP$ is contractive. It therefore yields a global form of the squeezing property. Note that detectability may hold for unstable dynamics with $\rho(L)>1.$ However the observations need to contain information on the unstable directions. It is not necessary that these are directly observed, but only that we can retrieve information from them by exploiting any rotations present in the dynamics. This is the interpretation of the matrix $D$ in the definition. The next result states the abstract global theorem of the previous section in the setting of linear dynamics. Our aim in including it here is to make apparent the connection between classical control theory \cite{lancaster1995algebraic}, ideas from data assimilation concerning the 3DVAR filter \cite{brett2012accuracy}, \cite{KLS13}, \cite{sanz}, and the new results for chaotic systems observed with unbounded noise in Section \ref{analysischaotic}.

\begin{theorem}Assume that ${\cal H}=\mathbb{R}^{d}$ and $\Psi(v)=Lv$
with $L\in\mathbb{R}^{d\times d}.$ Then if the pair $(L,P)$ is detectable
there is a constant
$c>0$ independent of the noise strength $\epsilon,$ such that
\[
\limsup_{j\to\infty}\mathbb{E}|v_{j}-\widehat{v}_{j}|^{2}\le c\epsilon^{2},
\]
and consequently in the linear Gaussian setting 
\[
\limsup_{j\to\infty}{\rm Trace}\, C_{j}\le c\epsilon^{2}.
\]
\end{theorem}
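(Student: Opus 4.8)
The plan is to deduce this theorem from the abstract global result, Theorem~\ref{globallipschitz}, together with its Corollary~\ref{corollarygloballipschitz} and the Remark on the Kalman covariance. Concretely, I would show that detectability of $(L,P)$ supplies exactly the data required to invoke Theorem~\ref{globallipschitz}: a bounded operator $D$, a constant $\alpha\in(0,1)$, and a Hilbert norm $V(\cdot)^{1/2}$ equivalent to $|\cdot|$ for which the global squeezing inequality $V\bigl((I-DP)(Lv-Lu)\bigr)\le\alpha V(v-u)$ holds for all $u,v\in\mathbb{R}^d$. Once this is in place, Theorem~\ref{globallipschitz} applied to the nonlinear observer \eqref{eq:general3dvar} yields $\limsup_{j\to\infty}\mathbb{E}|v_j-z_j|^2\le c\epsilon^2$; Corollary~\ref{corollarygloballipschitz}, which is just the $L^2$-optimality \eqref{eq:optimalityproperty}, upgrades this to the stated bound on $\mathbb{E}|v_j-\widehat v_j|^2$; and the final claim on $\mathrm{Trace}\,C_j$ follows from the Remark, since in the linear Gaussian case $C_j$ is deterministic and equals $\mathrm{var}[v_j|Y_j]$, so that $\mathrm{Trace}\,C_j=\mathbb{E}|v_j-\widehat v_j|^2$.

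The work therefore reduces to producing $D$, $\alpha$ and $V$ from detectability, and here lies the main obstacle. Specializing the error recursion \eqref{erroreq} to $\Psi(v)=Lv$ and using $Pw_{j+1}=w_{j+1}$ gives $\delta_{j+1}=(I-DP)L\,\delta_j-\epsilon D w_{j+1}$, so the matrix whose contractivity I must arrange is $M:=(I-DP)L=L-D(PL)$, \emph{not} $L-DP$ as in the definition of detectability. To reconcile the two I would use that $(I-DP)L$ and $L(I-DP)=L-(LD)P$ share the same nonzero spectrum: for invertible $L$, as holds for the time-$h$ flow maps of interest, the observer gain $L^{-1}D$ then gives $\rho\bigl((I-L^{-1}DP)L\bigr)=\rho(L-DP)<1$. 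For general $L$ one argues directly that $(L,P)$ detectable implies $(L,PL)$ detectable, which is the statement that $M$ can be made stable: the unobservable subspace of $(L,PL)$, namely $\bigcap_{k\ge1}\ker(PL^k)$, is $L$-invariant and is carried by $L$ into the unobservable subspace of $(L,P)$, on which $L$ is already contractive, while $L$ acts nilpotently on the quotient; hence $L$ restricted to the $(L,PL)$-unobservable subspace has spectral radius below one, and eigenvalue assignment then furnishes a gain $D$ with $\rho(M)<1$.

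With $\rho(M)<1$ secured, the construction of $V$ is routine Lyapunov theory and poses no difficulty: solving the discrete Lyapunov equation $M^{T}SM-S=-I$ for a symmetric positive-definite $S$ and setting $V(x)=\langle Sx,x\rangle$ yields a Hilbert norm equivalent to $|\cdot|$, with $V(Mx)=V(x)-|x|^2\le\alpha V(x)$ and $\alpha=1-\lambda_{\max}(S)^{-1}\in(0,1)$. This $V$, together with the gain $D$ and the constant $\alpha$, verifies the hypotheses of Theorem~\ref{globallipschitz}, and the chain of implications described in the first paragraph then completes the proof. The only genuinely delicate point is the spectral bridge of the second paragraph between the detectability normal form $L-DP$ and the operator $(I-DP)L$ governing \eqref{erroreq}; everything else is a direct application of the results already established.
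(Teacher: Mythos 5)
Your proposal is correct, and its overall architecture coincides with the paper's: both reduce the theorem to Theorem~\ref{globallipschitz} and Corollary~\ref{corollarygloballipschitz} by producing a gain $D$, a constant $\alpha\in(0,1)$ and an equivalent Hilbert norm $V^{1/2}$ in which $(I-DP)L$ is a contraction, and both identify the same crux, namely that the error recursion \eqref{erroreq} involves $(I-DP)L=L-D(PL)$ rather than the matrix $L-DP$ appearing in the definition of detectability, so that one must pass from detectability of $(L,P)$ to detectability of $(L,PL)$. Where you diverge is in how that bridge is built. The paper does it with the Hautus lemma: detectability of $(L,C)$ is the condition $\mathrm{Ker}(\lambda I-L)\cap\mathrm{Ker}(C)=\{0\}$ for $|\lambda|\ge1$, and the identity $\mathrm{Ker}(\lambda I-L)\cap\mathrm{Ker}(PL)=\mathrm{Ker}(\lambda I-L)\cap\mathrm{Ker}(P)$ for $\lambda\neq0$ (on an eigenvector $Lv=\lambda v$ one has $PLv=\lambda Pv$) settles the equivalence in two lines. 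You instead argue either by similarity for invertible $L$ --- your observation that $(I-L^{-1}DP)L=L^{-1}(L-DP)L$ is exactly right and gives the gain explicitly --- or, for general $L$, by tracking the unobservable subspaces $\bigcap_{k\ge1}\ker(PL^k)\supseteq\bigcap_{k\ge0}\ker(PL^k)$ and noting that $L$ maps the former into the latter while inducing the zero map on the quotient; this is also correct, but is a more structural argument where the paper's spectral test is essentially computation-free. Your explicit Lyapunov construction via $M^TSM-S=-I$ fills in a step the paper leaves implicit (``globally contractive in some Hilbert norm''), and is the standard and correct way to do so. In short: same strategy, same key lemma, a different and somewhat heavier proof of that lemma; the Hautus route is the shorter of the two, while your invariant-subspace argument makes the control-theoretic mechanism more transparent.
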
 

\begin{proof}By the Hautus lemma \cite{sontag1998mathematical} the pair
$(L,P)$ is detectable if and only if 
\[
{\rm {Rank}}\begin{pmatrix}\lambda I-L\\
P
\end{pmatrix}=d
\]
for all $\lambda$ with $|\lambda|\ge1$ or, in other words, if ${\rm Ker}(\lambda I-L)\cap{\rm Ker}(P)=\{0\}$
for all $\lambda$ with $|\lambda|\ge1.$ Using this characterization
of detectability it is immediate from the identity 
\[
{\rm Ker}(\lambda I-L)\cap{\rm Ker}(PL)={\rm Ker}(\lambda I-L)\cap{\rm Ker}(P),\quad\lambda\neq0,
\]
that $(L,P)$ is detectable iff $(L,PL)$ is detectable. Now by hypothesis
$(L,P)$ is detectable and so there exists a matrix $D$ such that
$\rho\bigl((I-DP)L\bigr)<1.$ Hence the linear map defined in $\mathbb{R}^{d}$
by the matrix $(I-DP)L$ is globally contractive in some Hilbert norm.
The result follows from Theorem \ref{globallipschitz} and Corollary
\ref{corollarygloballipschitz}.
\end{proof}

\subsection{Filter Accuracy for Chaotic Deterministic Dynamics}\label{analysischaotic}
In this section we study filter accuracy for signals satisfying Assumption \ref{Assumption 1}. Our analysis now makes use of {\em truncated} nonlinear observers \eqref{eq:truncated3dvardef}, which are forced to lie in the absorbing ball ${\cal B}_V$. The idea is that, once the signal gets into the absorbing ball, projecting the filter into ${\cal B}_V$ reduces the distance from the signal, as measured by the Lyapunov function $V.$ This is the content of the following lemma. $P_{{\cal B}_V}x$ denotes the closest point (in the $V^{1/2}$ norm) to $x\in{\cal H}$ in the set ${\cal B}_V.$ Therefore, $P_{{\cal B}_V}x= R^{1/2}\frac{x}{V^{1/2}(x)}$ for $x \notin {\cal B}_V.$ 

\begin{lemma}\label{lemmapb}Let $V^{1/2}(\cdot)$ be a Hilbert norm and let $R>0.$ Set ${\cal B}_V:=\{b\in{\cal H}:V(b)\le R\}$ as in Assumption \ref{Assumption 1}. 
Then, 
\begin{equation}
V(P_{{\cal B}_V}x-b)\le V(x-b),\quad\quad x\in{\cal H},b\in{\cal B}_V.\label{eq:lemmaresultpb}
\end{equation}
\end{lemma}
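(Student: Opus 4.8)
The plan is to recognize the inequality \eqref{eq:lemmaresultpb} as nothing more than the nonexpansiveness of the metric projection onto a closed convex set, carried out in the Hilbert space ${\cal H}$ equipped with the inner product $\langle\cdot,\cdot\rangle_V$ that induces the norm $V^{1/2}$, so that $V(u)=\langle u,u\rangle_V$ for every $u\in{\cal H}$. In this inner product the set ${\cal B}_V=\{b\in{\cal H}:V(b)\le R\}$ is exactly the closed ball of radius $R^{1/2}$ about the origin, and hence closed and convex. By the projection theorem already invoked in the definition of $P_{{\cal B}_V}$ (see \cite{rudin1987real}), the point $p:=P_{{\cal B}_V}x$ is well defined, and the whole argument should be conducted consistently in $\langle\cdot,\cdot\rangle_V$ rather than in the original inner product $\langle\cdot,\cdot\rangle$.

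The key tool I would use is the variational characterization of the projection: for every $x\in{\cal H}$ and every $c\in{\cal B}_V$,
\[
\langle x-p,\,c-p\rangle_V\le 0,
\]
which expresses that ${\cal B}_V$ lies on one side of the supporting hyperplane at $p$. I would state this as a standard consequence of the projection theorem in the $V^{1/2}$ geometry, so that no separate estimate is needed.

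With this in hand the conclusion is a one-line expansion. First observe that since $b\in{\cal B}_V$ the projection fixes it, $P_{{\cal B}_V}b=b$. Then, taking $c=b$ in the variational inequality gives $\langle x-p,\,p-b\rangle_V\ge 0$, and expanding in the $V$-inner product yields
\[
V(x-b)=V(x-p)+2\langle x-p,\,p-b\rangle_V+V(p-b)\ge V(p-b),
\]
which is precisely \eqref{eq:lemmaresultpb}. I do not expect any genuine obstacle here: the only point requiring care is that the Hilbert structure throughout must be the one induced by $V^{1/2}$, not the ambient $|\cdot|$, since ${\cal B}_V$ is a ball and $P_{{\cal B}_V}$ is orthogonal only with respect to $\langle\cdot,\cdot\rangle_V$. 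As an alternative I could instead substitute the explicit projection formula $P_{{\cal B}_V}x=R^{1/2}x/V^{1/2}(x)$ for $x\notin{\cal B}_V$ and verify the inequality by a direct computation, treating the cases $x\in{\cal B}_V$ and $x\notin{\cal B}_V$ separately; but the variational argument above handles both cases at once and is cleaner.
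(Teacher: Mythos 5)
Your proof is correct. Both you and the paper reduce the lemma to the same ``obtuse angle'' inequality $\langle P_{{\cal B}_V}x-b,\,x-P_{{\cal B}_V}x\rangle_V\ge 0$ and then conclude by the same expansion of $V(x-b)$; the difference is purely in how that inequality is obtained. You invoke the abstract variational characterization of the metric projection onto a closed convex set ($\langle x-p,\,c-p\rangle_V\le 0$ for all $c\in{\cal B}_V$), which is clean, works for any closed convex set, and never uses the fact that ${\cal B}_V$ is a ball. The paper instead substitutes the explicit formula $P_{{\cal B}_V}x=R^{1/2}x/V^{1/2}(x)$ for $x\notin{\cal B}_V$ and verifies the inequality by a direct computation ending with Cauchy--Schwarz and $V(b)\le R$ --- exactly the ``alternative'' you mention at the end of your proposal, except that the paper still routes the computation through the obtuse-angle inequality rather than expanding $V(x-b)$ from scratch. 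Your version is marginally more general and self-evidently robust to changing the truncation set; the paper's is self-contained and does not need to quote the variational characterization. Either is acceptable.
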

\begin{proof}
The case $x\in {\cal B}_V$ is obvious so we assume $V(x)>R.$ Let $\langle \cdot,\cdot \rangle_V$ denote the inner product associated with the norm $V^{1/2}.$
 We claim that 
\begin{equation}
\langle P_{{\cal B}_V}x-b,\, x-P_{{\cal B}_V}x\rangle_V\ge0.\label{eq:obtuse angle}
\end{equation}
Indeed we have
\[
\begin{aligned}\langle P_{{\cal B}_V}x-b,\, x-P_{{\cal B}_V}x\rangle_V & =\left\langle R^{1/2}\frac{x}{V^{1/2}(x)}-b,\, x-R^{1/2}\frac{x}{V^{1/2}(x)}\right\rangle_V\\
 & =\left(1-\frac{R^{1/2}}{V^{1/2}(x)}\right)\left\langle R^{1/2}\frac{x}{V^{1/2}(x)}-b,x\right\rangle_V\\
 & =\left(1-\frac{R^{1/2}}{V^{1/2}(x)}\right)\left(R^{1/2}V^{1/2}(x)-\langle b,x\rangle_V\right)\\
 & \ge\left(1-\frac{R^{1/2}}{V^{1/2}(x)}\right)\left(R^{1/2}V^{1/2}(x)-V^{1/2}(b)V^{1/2}(x)\right).
\end{aligned}
\]
Now, $R\ge V(b)$ because $b\in{\cal B}_V$ and the claim is proved.

Finally note that (\ref{eq:obtuse angle}) implies $V(P_{{\cal B}_V}x-b)\le V(x-b).$
To see this recall the elementary fact that for arbitrary $x_{1},x_{2}\in{\cal H}$
we have that $\langle x_{1},x_{2}\rangle_V\ge0$ implies $V(x_{1})\le V(x_{1}+x_{2})$
and choose $x_{1}:=P_{{\cal B}_V}x-b$ and $x_{2}:=x-P_{{\cal B}_V}x$. 
\end{proof}

Using the fact established in Lemma \ref{lemmapb} we are now in a position to prove
positive results about the truncated nonlinear observer, and hence
the optimal filter, in the long-time asymptotic regime.

\begin{theorem}\label{generaltheorem}
Suppose that Assumption \ref{Assumption 1} 
holds. Let $\{m_{j}\}_{j\ge0}$ be the sequence of ${\cal B}_V$-truncated
nonlinear observers given by (\ref{eq:truncated3dvardef}).
Then there is a constant $c>0,$ independent of the noise strength
$\epsilon,$ such that 
\[
\limsup_{j\to\infty}\mathbb{E}|v_{j}-m_{j}|^{2}\le c\epsilon^{2}.
\]
Consequently,

\[
\limsup_{j\to\infty}\mathbb{E}|v_{j}-\widehat{v}_{j}|^{2}\le c\epsilon^{2}, \quad \quad \limsup_{j\to\infty}{\rm Trace}\,\mathbb{E}\,{\rm var}[v_{j}\big|Y_{j}]\le c\epsilon^{2}.
\]
\end{theorem}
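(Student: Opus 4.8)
The plan is to control the Lyapunov functional $V(\delta_j)$ for the error $\delta_j:=v_j-m_j$ and only at the very end pass to $|\cdot|$ and to the optimal filter. Since $V^{1/2}$ is equivalent to $|\cdot|$ there are constants $0<\theta\le\Theta$ with $\theta|x|^2\le V(x)\le\Theta|x|^2$, so a bound $\limsup_j\mathbb{E}V(\delta_j)\le C'\epsilon^2$ immediately yields $\limsup_j\mathbb{E}|\delta_j|^2\le(C'/\theta)\epsilon^2$. The two ``consequently'' inequalities then follow because $m_j$ is $Y_j$-measurable (a deterministic functional of $y_1,\dots,y_j$ through \eqref{eq:truncated3dvardef}), so the optimality property \eqref{eq:optimalityproperty} gives $\mathbb{E}|v_j-\widehat v_j|^2\le\mathbb{E}|v_j-m_j|^2$, and the trace-of-variance statement is the identity recalled in Section~\ref{setupsection}.

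First I would establish a one-step contraction valid once the signal lies in the absorbing ball. Substituting $y_{j+1}=P\Psi(v_j)+\epsilon w_{j+1}$ and $Pw_{j+1}=w_{j+1}$ into \eqref{eq:truncated3dvardef} gives $m_{j+1}=P_{\mathcal{B}_V}\big((I-DP)\Psi(m_j)+DP\Psi(v_j)+\epsilon Dw_{j+1}\big)$. On the event $\{v_j\in\mathcal{B}\}$, forward invariance yields $v_{j+1}=\Psi(v_j)\in\mathcal{B}\subset\mathcal{B}_V$, hence $v_{j+1}=P_{\mathcal{B}_V}v_{j+1}$, and Lemma~\ref{lemmapb} with $b=v_{j+1}$ applied to the projection defining $m_{j+1}$ gives $V(\delta_{j+1})\le V\big((I-DP)(\Psi(m_j)-\Psi(v_j))+\epsilon Dw_{j+1}\big)$. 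Expanding this squared $V^{1/2}$-norm through its inner product and conditioning on the natural filtration $\mathcal{F}_j:=\sigma(v_0,w_1,\dots,w_j)$, the cross term drops out because $w_{j+1}$ is mean-zero and independent of $\mathcal{F}_j$; the squeezing property of Assumption~\ref{Assumption 1}, applied with $u=v_j\in\mathcal{B}$ and $v=m_j\in\mathcal{B}_V$, bounds the deterministic part by $\alpha V(\delta_j)$; and $\epsilon^2\mathbb{E}V(Dw_{j+1})\le C\epsilon^2$ by boundedness of $D$, norm equivalence and $\mathbb{E}|w_1|^2=1$. This gives
\[
\mathbb{E}[V(\delta_{j+1})\mid\mathcal{F}_j]\le\alpha V(\delta_j)+C\epsilon^2\qquad\text{on }\{v_j\in\mathcal{B}\}.
\]

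The main obstacle is that this estimate holds only after the signal enters $\mathcal{B}$, which occurs at a random time depending on $v_0$. Let $\tau:=\inf\{j:v_j\in\mathcal{B}\}$; by forward invariance $\{v_j\in\mathcal{B}\}=\{\tau\le j\}$ is increasing and $\mathcal{F}_j$-measurable. Iterating \eqref{eq:psicontractsoutsideball} gives $|v_j|^2\le\beta^j|v_0|^2+r_0$ with $\beta=e^{-r_1h}\in(0,1)$, so $\{\tau>j\}=\{v_j\notin\mathcal{B}\}\subseteq\{|v_0|^2>r_0\beta^{-j}\}$; since $\mathbb{E}|v_0|^2<\infty$ this forces $\mathbb{P}(\tau>j)\to0$ geometrically and, by dominated convergence, $\mathbb{E}[|v_j|^2\mathbf{1}_{\tau>j}]\to0$. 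Writing $\mathbb{E}V(\delta_j)=a_j+\mathbb{E}[V(\delta_j)\mathbf{1}_{\tau>j}]$ with $a_j:=\mathbb{E}[V(\delta_j)\mathbf{1}_{\tau\le j}]$, the crude bound $V(\delta_j)\le2\Theta|v_j|^2+c_1$ (valid since $m_j\in\mathcal{B}_V$) shows the transient piece $\mathbb{E}[V(\delta_j)\mathbf{1}_{\tau>j}]$ vanishes as $j\to\infty$.

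Finally I would iterate $a_j$. Multiplying the one-step estimate by the $\mathcal{F}_j$-measurable indicator $\mathbf{1}_{\tau\le j}$ and taking expectations gives $\mathbb{E}[V(\delta_{j+1})\mathbf{1}_{\tau\le j}]\le\alpha a_j+C\epsilon^2$, while the freshly-entered contribution $\mathbb{E}[V(\delta_{j+1})\mathbf{1}_{\tau=j+1}]$ is at most $c_0\,\mathbb{P}(\tau=j+1)$, because on $\{\tau=j+1\}$ both $v_{j+1}$ and $m_{j+1}$ lie in $\mathcal{B}_V$ and so $V(\delta_{j+1})$ is deterministically bounded. Hence
\[
a_{j+1}\le\alpha a_j+C\epsilon^2+c_0\,\mathbb{P}(\tau=j+1),
\]
an inhomogeneous geometric recursion whose forcing is a constant plus a geometrically decaying term. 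Summing exactly as in the Lyapunov lemma of Subsection~\ref{lyapunovmethod}, the constant produces $\limsup_j a_j\le C\epsilon^2/(1-\alpha)$ while the decaying forcing contributes a convolution of geometric sequences that tends to $0$. Combining with the transient estimate yields $\limsup_j\mathbb{E}V(\delta_j)\le C\epsilon^2/(1-\alpha)$, which by the first paragraph completes the proof.
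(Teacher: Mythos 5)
Your proposal is correct, and its core is the same as the paper's: project with Lemma \ref{lemmapb} (valid since $v_{j+1}\in{\cal B}\subset{\cal B}_V$ once the signal has entered the absorbing ball), expand the $V$-norm so that independence and $\mathbb{E}w_{j+1}=0$ kill the cross term, apply the squeezing property with $u=v_j\in{\cal B}$, $v=m_j\in{\cal B}_V$ to get the one-step bound $\mathbb{E}[V(\delta_{j+1})\mid{\cal F}_j]\le\alpha V(\delta_j)+C\epsilon^2$ on $\{v_j\in{\cal B}\}$, iterate, and finish with the $L^2$-optimality \eqref{eq:optimalityproperty} and the trace identity. Where you genuinely differ is in the bookkeeping of the pre-entry transient. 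The paper fixes, for each $\delta>0$, a deterministic time $J=J(\delta)$ and splits the whole probability space once into $\{v_J\in{\cal B}\}$ and its complement: on the complement Lemma \ref{lemmaoutsideb} bounds $\int V(v_j-m_j)\,d\mathbb{P}$ by $\delta$ uniformly in $j\ge J$ (using $|v_j|\le|v_0|$ there and $\mathbb{E}|v_0|^2<\infty$), while on $\{v_J\in{\cal B}\}$ the Gronwall iteration runs cleanly; the price is a final ``$\le c\epsilon^2+\delta$, $\delta$ arbitrary'' step. You instead introduce the hitting time $\tau$, use $\{v_j\in{\cal B}\}=\{\tau\le j\}$ (forward invariance), and absorb the freshly-entered mass into a geometrically decaying forcing term $c_0\,\mathbb{P}(\tau=j+1)$ in the recursion for $a_j=\mathbb{E}[V(\delta_j)\mathbf{1}_{\tau\le j}]$, with the residual $\mathbb{E}[V(\delta_j)\mathbf{1}_{\tau>j}]$ vanishing by the crude bound $V(\delta_j)\le 2\Theta|v_j|^2+c_1$ and dominated convergence. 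Your version avoids the $\delta$-limiting argument and the separate Lemma \ref{lemmaoutsideb} at the cost of a slightly more elaborate recursion (a convolution of geometric sequences); both yield the same constant up to the factor $1/(1-\alpha)$ and the same conclusion. All the measurability points you need (that $\{\tau\le j\}$ and $V(\delta_j)$ are ${\cal F}_j$-measurable, and that $m_j$ is $Y_j$-measurable for the optimality step) do hold, so there is no gap.
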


\begin{proof}By Lemma \ref{lemmaoutsideb} below, for arbitrary $\delta>0$ there
is $J>0$ such that, for every $j\ge J,$
\begin{equation}
\int_{\{v_{J}\notin {\cal B}\}}V(v_{j}-m_{j})d\mathbb{P}<\delta.\label{eq:notinabsorbingball}
\end{equation}
Now, for $j\ge J$ we have by the absorbing ball property that $v_{J}\in {\cal B}$
implies that $v_{j+1}\in {\cal B},$ and hence by Lemma
\ref{lemmapb} 
\begin{align*}
\int_{\{v_{J}\in {\cal B}\}} & V(v_{j+1}-m_{j+1})d\mathbb{P}\\
& \le\int_{\{v_{J}\in {\cal B}\}}V\Bigl((I-DP)\bigl(\Psi(v_j)-\Psi(z_j)\bigr) - \epsilon Dw_{j+1}\Bigr)d\mathbb{P}\\
 & =\int_{\{v_{J}\in {\cal B}\}}V(\epsilon Dw_{j+1})d\mathbb{P}+\int_{\{v_{J}\in {\cal B}\}}V\Bigl((I-DP)\bigl(\Psi(v_{j})-\Psi(m_{j})\bigr)\Bigr)d\mathbb{P}\\
 & \quad\quad\quad\quad\quad\quad\quad\quad\quad-\int_{\{v_{J}\in {\cal B}\}}\Big\langle\epsilon w_{j+1},(I-DP)\bigl(\Psi(v_{j})-\Psi(m_{j})\bigr)\Big\rangle_V d\mathbb{P}.
\end{align*}
Using the independence structure the last term vanishes, and for the
second term we can employ the squeezing property with $v_{j}\in {\cal B},\: m_{j}\in {\cal B}_V$
to deduce 
\begin{align*}
\int_{\{v_{J}\in {\cal B}\}}V(v_{j+1}-m_{j+1})d\mathbb{P} & \le c\epsilon^{2}+\alpha\int_{\{v_{J}\in {\cal B}\}}V(v_{j}-m_{j})d\mathbb{P}.
\end{align*}
Since $\alpha \in (0,1),$ Gronwall's lemma starting from $J$ gives (for a different constant $c>0$)
\begin{equation}
\limsup_{j\to\infty}\int_{\{v_{J}\in {\cal B}\}}V(v_{j+1}-m_{j+1})d\mathbb{P}\le c\epsilon^{2}.\label{eq:inabsorbingball}
\end{equation}
Finally, combining (\ref{eq:notinabsorbingball}) and (\ref{eq:inabsorbingball})
yields 

\[
\limsup_{j\to\infty}\mathbb{E}{V(v_{j}-m_{j})}\le c\epsilon^{2}+\delta,
\]
and since $\delta>0$ was arbitrary and the norms $V(\cdot)^{1/2}$ and $|\cdot|$ are assumed equivalent the proof is complete. 
\end{proof}

The following lemma is used in the preceding proof.

\begin{lemma}\label{lemmaoutsideb}
 Let $\delta>0.$ Then, with the notation and assumptions of the previous theorem, there is $J=J(\delta)$ such that, for every $j\ge J,$
\[
\int_{\{v_{J}\notin {\cal B}\}}V(v_{j}-m_{j})d\mathbb{P}<\delta.
\]
\end{lemma}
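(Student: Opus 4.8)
The plan is to exploit three ingredients: that the truncated observer is forced to live in the bounded set $\mathcal{B}_V$, that the absorbing ball property controls $|v_j|$ uniformly in $j$ in terms of $|v_0|$, and that the finite-variance assumption $\mathbb{E}|v_0|^2<\infty$ makes $|v_0|^2$ uniformly integrable. The crucial structural observation is that the event $\{v_J\notin\mathcal{B}\}$ depends only on $J$ and not on the running index $j$; so it suffices to dominate the integrand $V(v_j-m_j)$ by a single $\mathbb{P}$-integrable, $j$-independent random variable and then make the measure of the event small by taking $J$ large.

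First I would establish a uniform-in-$j$ bound on the integrand. Since $m_j\in\mathcal{B}_V$ for every $j$ by construction of the truncated observer \eqref{eq:truncated3dvardef}, the quantity $V^{1/2}(m_j)$ is bounded by a fixed constant. By the triangle inequality for the Hilbert norm $V^{1/2}$ together with the equivalence of $V^{1/2}$ and $|\cdot|$, this gives $V(v_j-m_j)\le C_1|v_j|^2+C_2$ for constants independent of $j$. The absorbing ball property \eqref{eq:psicontractsoutsideball}, applied at $t=jh$ with $v_j=\Psi_{jh}(v_0)$ and using $\exp(-r_1 jh)\le 1$, yields $|v_j|^2\le |v_0|^2+r_0$ for all $j\ge 0$. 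Combining these two estimates produces $V(v_j-m_j)\le C_1|v_0|^2+C_3$ with $C_1,C_3$ independent of $j$; the right-hand side is $\mathbb{P}$-integrable precisely because $\mathbb{E}|v_0|^2<\infty$.

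Next I would control the event. Applying \eqref{eq:psicontractsoutsideball} at $t=Jh$, the condition $v_J\notin\mathcal{B}$, i.e.\ $|v_J|^2>r^2=2r_0$, forces $\exp(-r_1 Jh)|v_0|^2>r_0$, and hence
\[
\{v_J\notin\mathcal{B}\}\subseteq\bigl\{|v_0|^2>r_0\exp(r_1 Jh)\bigr\}.
\]
Therefore, using the dominating bound from the previous step,
\[
\int_{\{v_J\notin\mathcal{B}\}} V(v_j-m_j)\,d\mathbb{P}\le\int_{\{|v_0|^2>r_0\exp(r_1 Jh)\}}\bigl(C_1|v_0|^2+C_3\bigr)\,d\mathbb{P},
\]
and the right-hand side no longer depends on $j$. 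Since $C_1|v_0|^2+C_3$ is integrable and the threshold $r_0\exp(r_1 Jh)\to\infty$ as $J\to\infty$, absolute continuity of the integral forces this tail integral to $0$. Choosing $J=J(\delta)$ so large that the tail drops below $\delta$ then gives the claimed bound simultaneously for all $j\ge J$.

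The only genuine subtlety, and it is a mild one, is securing uniformity in the running index $j$: this is exactly what the forward-invariance estimate $|v_j|^2\le|v_0|^2+r_0$ and the boundedness of $m_j$ in $\mathcal{B}_V$ deliver, collapsing the problem to a single application of absolute continuity for the fixed integrable dominating function $C_1|v_0|^2+C_3$.
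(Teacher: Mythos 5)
Your proof is correct and follows essentially the same strategy as the paper's: dominate $V(v_j-m_j)$ uniformly in $j$ by an integrable function of $|v_0|$ (using $m_j\in\mathcal{B}_V$ and the absorbing ball property to control $|v_j|$ via $|v_0|$), then shrink the event $\{v_J\notin\mathcal{B}\}$ by taking $J$ large and invoke absolute continuity of the integral. The only cosmetic differences are that you make the inclusion $\{v_J\notin\mathcal{B}\}\subseteq\{|v_0|^2>r_0 e^{r_1 Jh}\}$ explicit and use the bound $|v_j|^2\le|v_0|^2+r_0$ where the paper uses $|v_j|\le|v_0|$ on that event; both are valid.
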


\begin{proof}Firstly, by the assumed equivalence of norms there is
$\theta>0$ such that $V(\cdot)^{1/2}\le \theta|\cdot|.$ Secondly, using the absorbing
ball property it is easy to check that $\mathbb{P}[v_{J}\notin {\cal B}]$
can be made arbitrarily small by choosing $J$ large enough. Therefore,
since we work with the standing assumption that $\mathbb{E}|v_{0}|^{2}<\infty,$ it is possible
to choose $J$ large enough so that 
\begin{align*}
\int_{\{v_{J}\notin {\cal B}\}}\theta^2|v_{0}|^{2}+R^2+2R\theta|v_{0}|d\mathbb{P}\le\delta.
\end{align*}
Then, for $j>J,$ 
\begin{align*}
\int_{\{v_{J}\notin {\cal B}\}}V(v_{j}-m_{j})d\mathbb{P} & \le\int_{\{v_{J}\notin {\cal B}\}}V(v_{j})+V(m_{j})+2V(v_{j})^{1/2}V(m_{j})^{1/2}d\mathbb{P}\\
 & \le\int_{\{v_{J}\notin {\cal B}\}}V(v_{j})+R^2+2RV(v_{j})^{1/2}d\mathbb{P}\\
 & \le\int_{\{v_{J}\notin {\cal B}\}}\theta^2|v_{j}|^{2}+R^2+2R\theta|v_{j}|d\mathbb{P}\\
 & \le\int_{\{v_{J}\notin {\cal B}\}}\theta^2|v_{0}|^{2}+R^2+2R\theta|v_{0}|d\mathbb{P}\le\delta,
\end{align*}
where we used that, for $j>J$ and $v_{J}\notin {\cal B},$
$|v_{j}|\le|v_{0}|$ by (\ref{eq:psicontractsoutsideball}). 
\end{proof}

\section{Application to Relevant Models\label{sec:Application-to-relevant}}

\subsection{Finite Dimensions (Lorenz '63 and '96 Models)}\label{finitedimexamples}

We study first the finite dimensional case ${\cal H}=\mathbb{R}^{d}.$
Our aim is to introduce a general setting for which Assumption \ref{Assumption 1}  holds, and thus the theory of
the previous section can be applied. In order to do so we need to
introduce suitable norms, and some conditions on the general nonlinear
dissipative equation (\ref{generalform}). We start by setting $|\cdot|$
to be the Euclidean norm, and $V(\cdot)=|P\cdot|^{2}+|\cdot|^{2}$. 

Next we introduce a set of hypotheses on the general system (\ref{generalform}),
and the observation matrix $P.$

\begin{assumptions}\label{assumptions6396} \ \\
 1. $\langle Au,u\rangle\ge|u|^{2},\quad\forall u\in{\cal H}.$ \\
 2. $\langle B(u,u),u\rangle=0,\quad\forall u\in{\cal H}.$ (Energy
conserving nonlinearity.)\\
 3. There is $c_{1}>0$ such that $2|\langle B(u,\tilde{u}),\tilde{u}\rangle|\le c_{1}|P\tilde{u}||u||\tilde{u}|,\quad\forall u,\tilde{u}\in{\cal H}.$\\
 4. There is $c_{2}>0$ such that $|B(u,\tilde{u})|\le c_{2}|u||\tilde{u}|,\quad\forall u,\tilde{u}\in{\cal H}.$\\
 5. There are $c_{3}>0$ and $c_{4}\ge0$ such that $\langle Au,Pu\rangle\ge c_{3}|Pu|^{2}-c_{4}|u|^{2}.$
\end{assumptions}

Assumptions \ref{assumptions6396}.1, \ref{assumptions6396}.2 and
\ref{assumptions6396}.4 are satisfied by various important dissipative
equations, including the Lorenz '63  \cite{hayden2011discrete}  (and used in \cite{law2012analysis}),
and Lorenz '96 models \cite{sanz}. Assumptions \ref{assumptions6396}.3
and \ref{assumptions6396}.5 are fulfilled when the `right' parts
of the system are observed. Examples of observation matrices
$P$ that fit into our theory are given ---both for the Lorenz '63
and '96 models--- in Subsections \ref{lorenz63} and \ref{lorenz96}.

The first two items of Assumption \ref{assumptions6396} are enough to ensure the absorbing
ball property Assumption \ref{Assumption 1}. Indeed, if these conditions
hold then taking the inner product of (\ref{generalform}) with $v$
gives
\[
\frac{1}{2}\frac{d}{dt}|v|^{2}+\langle Av,v\rangle+\langle B(v,v),v\rangle=\langle f,v\rangle,
\]
or
\[
\frac{d}{dt}|v|^{2}+|v|^{2}\le|f|^{2}.
\]
Finally, Gronwall's lemma yields Assumption \ref{Assumption 1}.1 with
$r_{0}=|f|^{2}$ and $r_{1}=1,$ and the absorbing ball
\begin{equation}\label{absball6396}
{\cal B}:= \{u\in {\cal H}: |u|\le r:=\sqrt{2}|f| \}.
\end{equation}

We now show that the squeezing property is also satisfied provided
that the time $h$ between observations is sufficiently small. The
proof is based on the analysis of the Lorenz '63 model in \cite{hayden2011discrete}. Recall that $Q=I-P$ is the operator
that projects onto the unobserved part of the system.

\begin{lemma}\label{theorem3dvar} Suppose that Assumption \ref{assumptions6396}
holds and let $r'>0$. Then there is $h^{\star}>0$ with the property
that for all $h<h^{\star},$ $v\in {\cal B},$ and $u\in{\cal H}$
with $|u-v|\le r',$ there exists $\alpha=\alpha(r')\in(0,1)$ such
that 
\[
V\Bigl(Q\bigl(\Psi(v)-\Psi(u)\bigr)\Bigr)\le\alpha V(v-u).
\]
\end{lemma}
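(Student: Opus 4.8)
The plan is to work with the underlying continuous-time flow, control the error along trajectories, and then integrate over the observation window $[0,h]$ and optimise over small $h$. Write $\phi(t):=\Psi_t(v)$ and $\psi(t):=\Psi_t(u)$ for the two solutions of \eqref{generalform} started at $v$ and $u$, and set $w(t):=\phi(t)-\psi(t)$, so that $w(0)=v-u$ and $Q\bigl(\Psi(v)-\Psi(u)\bigr)=Qw(h)$. Since $P$ is an orthogonal projection and $Q=I-P$, we have $PQ=0$, hence $V(Qw)=|Qw|^2$ and $V(w)=2|Pw|^2+|Qw|^2$; in particular it suffices to bound $|Qw(h)|^2\le|w(h)|^2$. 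Expanding the nonlinearity about $\phi$ and using bilinearity and symmetry of $B$ gives the difference equation
\[
\frac{dw}{dt}+Aw+2B(\phi,w)-B(w,w)=0,
\]
which is the object of all the estimates below. Because $v\in{\cal B}$ and ${\cal B}$ is forward invariant, $|\phi(t)|\le r$ for all $t\ge0$.

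First I would derive an energy estimate for $|w|^2$ exhibiting dissipative decay driven only by the observed component. Pairing the difference equation with $w$, Assumption \ref{assumptions6396}.2 kills the cubic term $\langle B(w,w),w\rangle$, Assumption \ref{assumptions6396}.1 gives $\langle Aw,w\rangle\ge|w|^2$, and Assumption \ref{assumptions6396}.3 controls $2|\langle B(\phi,w),w\rangle|\le c_1 r|Pw||w|$; a Young inequality then yields
\[
\frac{d}{dt}|w|^2\le-|w|^2+M|Pw|^2,\qquad M:=c_1^2r^2.
\]
Integrating against $e^{t}$ gives the representation $|w(h)|^2\le e^{-h}|w(0)|^2+M\int_0^h e^{-(h-s)}|Pw(s)|^2\,ds$, while the crude consequence $\tfrac{d}{dt}|w|^2\le M|w|^2$ gives the a priori bound $\sup_{s\in[0,h]}|w(s)|^2\le e^{Mh}|w(0)|^2$. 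The factor $e^{-h}<1$ in front of $|w(0)|^2$ is the strict contraction on which the statement rests.

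Next I would bound the observed component $|Pw(s)|^2$ to estimate the integral term. Pairing the difference equation with $Pw$ and using Assumption \ref{assumptions6396}.5 to bound $-\langle Aw,Pw\rangle\le-c_3|Pw|^2+c_4|w|^2$, together with Assumption \ref{assumptions6396}.4 to control the two nonlinear contributions by $2c_2|\phi||w||Pw|$ and $c_2|w|^2|Pw|$, one obtains (after discarding the helpful term $-c_3|Pw|^2$) an inequality of the form $\tfrac{d}{dt}|Pw|^2\le C(r')|w(0)|^2$ on $[0,h]$ for $h\le1$. Here the quadratic term $B(w,w)$ is the only place the hypothesis $|u-v|=|w(0)|\le r'$ is used, since it converts the cubic quantity $|w|^3\le r'|w|^2$ into a controllable one, and this is precisely the origin of the dependence $\alpha=\alpha(r')$. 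Writing $|w(0)|^2=|Pw(0)|^2+|Qw(0)|^2$, integration gives $|Pw(s)|^2\le|Pw(0)|^2+C(r')\,s\,\bigl(|Pw(0)|^2+|Qw(0)|^2\bigr)$.

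Finally I would combine the pieces. Substituting the last bound into the integral representation, I read off
\[
|Qw(h)|^2\le\underbrace{\bigl(e^{-h}+Mh+O(h^2)\bigr)}_{=:a(h)}|Pw(0)|^2+\underbrace{\bigl(e^{-h}+O(h^2)\bigr)}_{=:b(h)}|Qw(0)|^2.
\]
Comparing with $V(w(0))=2|Pw(0)|^2+|Qw(0)|^2$, the claim $V(Qw(h))=|Qw(h)|^2\le\alpha V(w(0))$ holds with $\alpha:=\max\{b(h),a(h)/2\}$. As $h\to0^+$ one has $b(h)=1-h+O(h^2)$ and $a(h)/2\to\tfrac12$, so for $h$ below some $h^\star=h^\star(r')$ both $b(h)<1$ and $a(h)\le 2b(h)$, giving $\alpha=\alpha(r')\in(0,1)$. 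The main obstacle is the bookkeeping in this last step: the unobserved initial error $|Qw(0)|^2$ leaks into the observed modes and could in principle feed back into $|Qw(h)|^2$ with an $O(1)$ coefficient, which would destroy the contraction; the whole point is that this leakage is only $O(h^2)$, so the genuine dissipative factor $e^{-h}$ survives, while the weight $2$ that $V$ assigns to $|Pw|^2$ absorbs the $O(h)$ growth of the observed component. Handling the quadratic nonlinearity uniformly is what forces both the localisation $|u-v|\le r'$ and the smallness of $h$.
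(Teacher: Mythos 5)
Your proposal is correct and follows essentially the same route as the paper: the paper's proof of this lemma rests on Lemma \ref{bound}, which derives exactly your two coupled differential inequalities (the energy estimate $\frac{d}{dt}|\delta|^2+|\delta|^2\le c_1^2r^2|P\delta|^2$ from Assumptions \ref{assumptions6396}.1--3, the bound $\frac{d}{dt}|P\delta|^2\le C(t,r')|\delta_0|^2$ from Assumptions \ref{assumptions6396}.4--5 together with the a priori growth bound), integrates them, and then extracts the contraction from the fact that the coefficient of $|\delta_0|^2$ equals $1-t+O(t^2)$ while the coefficient of $|P\delta_0|^2$ is $O(t)$. Your bookkeeping via the decomposition $V(w)=2|Pw|^2+|Qw|^2$ is equivalent to the paper's comparison with $\max\{b_1(t),b_2(t)\}$ against $V(\delta_0)=|P\delta_0|^2+|\delta_0|^2$, and your $\alpha$ likewise depends on $h$ as well as $r'$, just as in the paper.
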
 
\begin{proof} Denote $\delta_{0}=u-v$ and $\delta(t)=\Psi_{t}(u)-\Psi_{t}(v).$
Lemma \ref{bound} below shows that 
\[
|\delta(t)|^{2}\le b_{1}(t)|\delta_{0}|^{2}+b_{2}(t)|P\delta_{0}|^{2},
\]
where $b_{1}(t)$ and $b_{2}(t)$ are also defined in Lemma \ref{bound}.
Therefore, noting that $V\bigl(Q\delta(t)\bigr)=|Q\delta(t)|^{2}\le|\delta(t)|^{2},$
\begin{align*}
V\bigl(Q\delta(t)\bigr) & \le\max{\{b_{1}(t),b_{2}(t)\}}V(\delta_{0}).
\end{align*}
Since $b_{1}(0)=1,$ $b_{2}(0)=0,$ and $b_{1}'(0)=-1<0$ it follows
that, for all sufficiently small $t,$ $\max{\{b_{1}(t),b_{2}(t)\}}\in (0,1)$
and the lemma is proved.
\end{proof}

The following result has been used in the proof.

\begin{lemma}\label{bound} Suppose that the notation and assumptions of the previous lemma are in force, and that $|\delta_{0}|\le r'.$ Then, for $t\in[0,h),$ 
\begin{align*}
|P\delta(t)|^{2} & \le|P\delta_{0}|^{2}+\left(k_{4}(e^{kt}-1)+k_{5}(e^{2kt}-1)\right)|\delta_{0}|^{2},
\end{align*}
and 
\begin{align*}
 & |\delta(t)|^{2}\le k_{1}(1-e^{-t})|P\delta_{0}|^{2}\\
 & +\left(e^{-t}+k_{2}\left[\frac{e^{kt}-e^{-t}}{k+1}-(1-e^{-t})\right]+k_{3}\left[\frac{e^{2kt}-e^{-t}}{2k+1}-(1-e^{-t})\right]\right)|\delta_{0}|^{2},
\end{align*}
where $k$ and $k_{i},$ $1\le i\le5,$ are constants defined in the proof,
and $k_{3}$ and $k_{5}$ depend on $r'.$ Therefore, 
\begin{equation}\label{pdeltaa1bound}
|P\delta(t)|^{2}\le a_{1}(t)|\delta_{0}|^{2}+|P\delta_{0}|^{2}
\end{equation}
and 
\begin{equation}\label{deltab1b2bound}
|\delta(t)|^{2}\le b_{1}(t)|\delta_{0}|^{2}+b_{2}(t)|P\delta_{0}|^{2},
\end{equation}
where the functions $a_{1},$ $b_{1}$ and $b_{2}$ are defined in
the obvious way from the expressions above.
\end{lemma}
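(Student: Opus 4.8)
The plan is to derive two coupled energy estimates for the trajectory difference $\delta(t)=\Psi_t(u)-\Psi_t(v)$ and then decouple them by a carefully ordered integration. First I would write down the evolution law for $\delta$. Both $\Psi_t(u)$ and $\Psi_t(v)$ solve \eqref{generalform}, and since $B$ is bilinear and symmetric,
\[
B\bigl(\Psi_t(u),\Psi_t(u)\bigr)-B\bigl(\Psi_t(v),\Psi_t(v)\bigr)=B\bigl(\Psi_t(u)+\Psi_t(v),\,\delta\bigr)=2B\bigl(\phi(t),\delta\bigr)+B(\delta,\delta),
\]
where $\phi(t):=\Psi_t(v)$. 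Hence $\delta$ satisfies
\[
\frac{d\delta}{dt}+A\delta+2B\bigl(\phi(t),\delta\bigr)+B(\delta,\delta)=0.
\]
Since $v\in{\cal B}$ and ${\cal B}$ is forward invariant (Assumption \ref{Assumption 1}.1), we have $\phi(t)\in{\cal B}$, so $|\phi(t)|\le r$ for all $t\ge0$.

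Next I would test this identity against $\delta$ and against $P\delta$. Testing against $\delta$: Assumption \ref{assumptions6396}.2 annihilates $\langle B(\delta,\delta),\delta\rangle$, Assumption \ref{assumptions6396}.1 gives $\langle A\delta,\delta\rangle\ge|\delta|^2$, and Assumption \ref{assumptions6396}.3 yields $2|\langle B(\phi,\delta),\delta\rangle|\le c_1 r|P\delta||\delta|$; a Young inequality then produces
\[
\frac{d}{dt}|\delta|^2+|\delta|^2\le c_1^2r^2|P\delta|^2.\qquad(\star)
\]
Testing against $P\delta$ (using that $P$ is an orthogonal projection, so $\tfrac{d}{dt}|P\delta|^2=2\langle\dot\delta,P\delta\rangle$): Assumption \ref{assumptions6396}.5 controls $-\langle A\delta,P\delta\rangle\le -c_3|P\delta|^2+c_4|\delta|^2$, while Assumption \ref{assumptions6396}.4 with $|\phi|\le r$ and $|P\delta|\le|\delta|$ handles the two nonlinear terms, giving
\[
\frac{d}{dt}|P\delta|^2\le(2c_4+4c_2r)|\delta|^2+2c_2|\delta|^3.\qquad(\star\star)
\]

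The apparent circularity---$(\star)$ needs $|P\delta|^2$ and $(\star\star)$ needs $|\delta|^2$---is broken by first extracting a crude bound. From $(\star)$ and $|P\delta|\le|\delta|$ we get $\tfrac{d}{dt}|\delta|^2\le(c_1^2r^2-1)|\delta|^2$, hence $|\delta(t)|^2\le e^{kt}|\delta_0|^2$ with $k:=c_1^2r^2-1$ (or any larger constant). Feeding this into $(\star\star)$, the quadratic term integrates to a multiple of $(e^{kt}-1)$, and for the cubic term I would estimate $|\delta(s)|^3\le|\delta_0|\,e^{3ks/2}|\delta_0|^2\le r'e^{2ks}|\delta_0|^2$, using both $|\delta_0|\le r'$ and $e^{3ks/2}\le e^{2ks}$. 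Integrating $(\star\star)$ from $0$ to $t$ then yields exactly \eqref{pdeltaa1bound}, with $a_1(t)=k_4(e^{kt}-1)+k_5(e^{2kt}-1)$ where $k_4$ is proportional to $c_4+c_2r$ and $k_5$ is proportional to $c_2r'$; this is precisely where the advertised $r'$-dependence of $k_5$ (hence of $k_3$) originates.

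Finally I would solve $(\star)$ with integrating factor $e^{t}$, obtaining $|\delta(t)|^2\le e^{-t}|\delta_0|^2+c_1^2r^2\int_0^t e^{-(t-s)}|P\delta(s)|^2\,ds$, and substitute the bound $|P\delta(s)|^2\le|P\delta_0|^2+a_1(s)|\delta_0|^2$ just obtained. The elementary integrals $\int_0^t e^{-(t-s)}\,ds=1-e^{-t}$, $\int_0^t e^{-(t-s)}e^{ks}\,ds=\frac{e^{kt}-e^{-t}}{k+1}$, and $\int_0^t e^{-(t-s)}e^{2ks}\,ds=\frac{e^{2kt}-e^{-t}}{2k+1}$ reproduce the bracketed expressions in \eqref{deltab1b2bound}, with $k_1=c_1^2r^2$, $k_2=k_1k_4$, $k_3=k_1k_5$. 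The main obstacle I anticipate is the cubic term $B(\delta,\delta)$ in $(\star\star)$: it admits no global control, which forces the localization $|\delta_0|\le r'$ and is the sole source of the $r'$-dependent constants. Everything else is bookkeeping, and the structural point that makes the argument work is that the $|P\delta|^2$ estimate requires only a \emph{crude} exponential bound on $|\delta|^2$, so the two estimates close sequentially rather than simultaneously.
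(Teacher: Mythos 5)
Your proposal is correct and follows essentially the same route as the paper: derive the error equation $\frac{d\delta}{dt}+A\delta+2B(v,\delta)+B(\delta,\delta)=0$, test against $\delta$ and against $P\delta$, break the circularity with the crude bound $|\delta(t)|^2\le e^{kt}|\delta_0|^2$, integrate the $|P\delta|^2$ inequality first, and then feed the result back through the integrating factor $e^t$. The only cosmetic differences are that you derive the crude exponential bound directly from $(\star)$ rather than citing it, and you bound the cubic term via $|P\delta|\le|\delta|$ and $|\delta|^3\le r'e^{2ks}|\delta_0|^2$ instead of the paper's substitution $|\delta|\le e^{kt/2}r'$ before Young's inequality; both yield the same functional form with $r'$-dependent $k_5$ and $k_3$.
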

\begin{proof} Firstly,
it is not difficult to check (see for example \cite{KLS13}) that
Assumptions \ref{assumptions6396}.1, \ref{assumptions6396}.2 and
\ref{assumptions6396}.3 imply that there exists a constant $k>0$
such that, for $u\in{\cal H},$ $v\in {\cal B}$ and
$t>0,$
\[
|\delta(t)|^{2}\le e^{kt}|\delta_{0}|^{2}.
\]
Next, using the definition of $\delta$ and the symmetry of $B(\cdot,\cdot)$
it is possible to derive \cite{law2012analysis} the error equation

\begin{equation}
\frac{d\delta}{dt}+A\delta+2B(v,\delta)+B(\delta,\delta)=0.\label{errorinterval}
\end{equation}
Taking the inner product with $\delta$ we obtain 
\[
\frac{1}{2}\frac{d}{dt}|\delta|^{2}+\langle A\delta,\delta\rangle+2\langle B(v,\delta),\delta\rangle=0,
\]
and therefore 
\[
\frac{1}{2}\frac{d}{dt}|\delta|^{2}+|\delta|^{2}\le c_{1}r|\delta||P\delta|\le\frac{1}{2}|\delta|^{2}+\frac12 c_{1}^{2}r^{2}|P\delta|^{2},
\]
i.e. 
\begin{equation}
\frac{d|\delta|^{2}}{dt}+|\delta|^{2}\le c_{1}^{2}r^{2}|P\delta|^{2}.\label{partialdeltabound}
\end{equation}
We now bound $|P\delta|^{2}.$ Taking the inner product of (\ref{errorinterval})
with $P\delta$ 
\[
\frac{1}{2}\frac{d}{dt}|P\delta|^{2}+\langle A\delta,P\delta\rangle+2\langle B(v,\delta),P\delta\rangle+\langle B(\delta,\delta),P\delta\rangle=0.
\]
Hence, 
\begin{align*}
\frac{1}{2}\frac{d}{dt}|P\delta|^{2}+\langle A\delta,P\delta\rangle & \le2|\langle B(v,\delta),P\delta\rangle|+|\langle B(\delta,\delta),P\delta\rangle|\\
 & \le 2c_{2}r|\delta||P\delta|+c_{2}|\delta|^{2}|P\delta|
\end{align*}
and 
\begin{align*}
\frac{1}{2}\frac{d}{dt}|P\delta|^{2}+c_{3}|P\delta|^{2} & \le c_{4}|\delta|^{2}+2c_{2}r|\delta||P\delta|+c_{2}|\delta|^{2}|P\delta|\\
 & \le c_{4}|\delta|^{2}+2c_{2}r|\delta||P\delta|+c_{2}|\delta|e^{kt/2}r'|P\delta|\\
 & \le c_{4}|\delta|^{2}+\frac{2}{c_{3}}c_{2}^{2}r^{2}|\delta|^{2}+\frac{c_{3}}{2}|P\delta|^{2}+\frac{1}{2c_{3}}c_{2}^{2}e^{kt}r'^{2}|\delta|^{2}+\frac{c_{3}}{2}|P\delta|^{2}
\end{align*}
i.e. 
\[
\frac{d}{dt}|P\delta|^{2}\le\left(2c_{4}+\frac{4}{c_{3}}c_{2}^{2}r^{2}+\frac{1}{c_{3}}c_{2}^{2}e^{kt}r'^{2}\right)|\delta|^{2}.
\]
On integrating from $0$ to $t$ and using that $|\delta(t)|^{2}\le|\delta_{0}|^{2}e^{kt}$:
\begin{align*}
|P\delta(t)|^{2} & \le|P\delta_{0}|^{2}+\left(\frac{2c_{4}+\frac{4}{c_{3}}c_{2}^{2}r^{2}}{k}(e^{kt}-1)+\frac{c_{2}^{2}r'^{2}}{2kc_{3}}(e^{2kt}-1)\right)|\delta_{0}|^{2}\\
 & =|P\delta_{0}|^{2}+\left(k_{4}(e^{kt}-1)+k_{5}(e^{2kt}-1)\right)|\delta_{0}|^{2},
\end{align*}
where the last equality defines $k_{4}$ and $k_{5}.$ This proves \eqref{pdeltaa1bound}. Then, going
back to (\ref{partialdeltabound}), 
\begin{align*}
\frac{d}{dt}|\delta|^{2}+|\delta|^{2}\le c_{1}^{2}r^{2}\left\{ |P\delta_{0}|^{2}+\left(k_{4}(e^{kt}-1)+k_{5}(e^{2kt}-1)\right)|\delta_{0}|^{2}\right\} .
\end{align*}
After denoting $k_{1}=c_{1}^{2}r^{2},$ $k_{2}=k_{1}k_{4}$ and
$k_{3}=k_{1}k_{5}$ the inequality above becomes 
\begin{align*}
\frac{d}{dt}|\delta|^{2} & +|\delta|^{2}\le k_{1}|P\delta_{0}|^{2}+\left(k_{2}(e^{kt}-1)+k_{3}(e^{2kt}-1)\right)|\delta_{0}|^{2}.
\end{align*}
Finally, Gronwall's lemma gives \eqref{deltab1b2bound}.
\end{proof} 

The previous lemmas show that Assumption \ref{assumptions6396} implies the squeezing property
Assumption \ref{Squeezingpropertyassumption}.2 provided that the assimilation time $h$ is sufficiently small. Indeed taking 
\begin{equation}\label{bvdef}
{\cal B}_V := \{ u\in {\cal H}: V(u)^{1/2} \le \sqrt{2}r \}
\end{equation}
with $r$ as in \eqref{absball6396}  we have that $|u-v|\le (1+\sqrt{2}) r$ for $u \in {\cal B},\, v\in {\cal B}_V,$ and we are in the setting of Lemma \ref{theorem3dvar} with $r'=(1+\sqrt{2})r$. Moreover, the requirement ${\cal B}\subset {\cal B}_V$ in \ref{Squeezingpropertyassumption}.2 is also fulfilled. Therefore the following result is a direct application of Theorem \ref{generaltheorem}.

\begin{theorem}\label{theorem6396}
Assume that the signal dynamics are defined via a general dissipative
differential equation on $\mathbb{R}^{d}$ with quadratic energy-conserving
nonlinearity of the form (\ref{generalform}), and that Assumption
\ref{assumptions6396} is satisfied.
Then there is $h^{\star}>0$
such that Assumption \ref{Assumption 1} 
is also satisfied for all $h<h^{\star}.$ Therefore, if $\{m_{j}\}_{j\ge0}$
denotes the sequence of ${\cal B}_V$-truncated
nonlinear observers given by (\ref{eq:truncated3dvardef}) and \eqref{bvdef}, then there
is a constant $c>0,$ independent of the noise strength $\epsilon,$
such that, for all discrete assimilation time $h<h^{\star},$ 

\[
\limsup_{j\to\infty}\mathbb{E}|v_{j}-m_{j}|^{2}\le c\epsilon^{2}.
\]
Consequently  
\[
\limsup_{j\to\infty}\mathbb{E}|v_{j}-\widehat{v}_{j}|^{2}\le c\epsilon^{2}, \quad \quad \limsup_{j\to\infty}{\rm Trace}\,\mathbb{E}\,{\rm var}[v_{j}\big|Y_{j}]\le c\epsilon^{2}.
\]
\end{theorem}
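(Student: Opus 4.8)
The plan is to show that Assumption \ref{assumptions6396} forces both clauses of Assumption \ref{Assumption 1} for the concrete choices $V(u) = |Pu|^2 + |u|^2$, $D = I$ (so that $I - DP = Q$), and $\mathcal{B}_V$ given by \eqref{bvdef}; the three displayed conclusions are then an immediate instance of Theorem \ref{generaltheorem}. The absorbing ball property, Assumption \ref{Assumption 1}.1, is already in hand: pairing \eqref{generalform} with $v$ and invoking the coercivity $\langle Au,u\rangle \ge |u|^2$ together with the energy-conserving identity $\langle B(v,v),v\rangle = 0$ (Assumptions \ref{assumptions6396}.1--\ref{assumptions6396}.2) gives $\frac{d}{dt}|v|^2 + |v|^2 \le |f|^2$, whence Gronwall yields \eqref{eq:psicontractsoutsideball} with $r_1 = 1$, $r_0 = |f|^2$ and the absorbing ball \eqref{absball6396} of radius $r = \sqrt{2}|f|$.

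Before the squeezing step I would dispose of the structural requirements. Writing $V(u) = \langle u, (I + P^TP)u\rangle$ with $I + P^TP$ symmetric positive definite shows $V^{1/2}$ is a Hilbert norm; since $P$ is an orthogonal projection we have $|Pu| \le |u|$, so $|u|^2 \le V(u) \le 2|u|^2$ and $V^{1/2}$ is equivalent to $|\cdot|$, while $D = I$ is trivially bounded. The inclusion $\mathcal{B} \subset \mathcal{B}_V$ demanded by Assumption \ref{Assumption 1}.2 also follows, since $|u| \le r$ gives $V(u)^{1/2} \le \sqrt{2}|u| \le \sqrt{2}r$.

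The substantive step is the squeezing inequality $V\bigl(Q(\Psi(v) - \Psi(u))\bigr) \le \alpha V(v-u)$ for all $u \in \mathcal{B}$, $v \in \mathcal{B}_V$, which I would read off from Lemma \ref{theorem3dvar}. The key observation is that every admissible pair is uniformly close: $u \in \mathcal{B}$ forces $|u| \le r$ and $v \in \mathcal{B}_V$ forces $|v| \le V(v)^{1/2} \le \sqrt{2}r$, so $|v - u| \le (1 + \sqrt{2})r =: r'$. With the base point taken in $\mathcal{B}$ and the roles of the two arguments interchanged (harmless, since $Q$ is linear and $V$ is even), Lemma \ref{theorem3dvar} supplies a threshold $h^\star > 0$ and a single $\alpha = \alpha(r') \in (0,1)$ for which the inequality holds for every $h < h^\star$. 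This is exactly Assumption \ref{Assumption 1}.2, so Assumption \ref{Assumption 1} holds in full for $h < h^\star$.

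With Assumption \ref{Assumption 1} verified, Theorem \ref{generaltheorem} applies verbatim to the $\mathcal{B}_V$-truncated observer $\{m_j\}$, giving $\limsup_j \mathbb{E}|v_j - m_j|^2 \le c\epsilon^2$, and the two consequences for $\widehat{v}_j$ and for $\mathrm{Trace}\,\mathbb{E}\,\mathrm{var}[v_j|Y_j]$ follow from the $L^2$-optimality \eqref{eq:optimalityproperty} precisely as in Theorem \ref{generaltheorem}. The only real analytic content lies upstream, in the differential-inequality estimates of Lemma \ref{bound}; at the level of this theorem the sole delicate point is the radius bookkeeping — checking that the single value $r' = (1 + \sqrt{2})r$ dominates $|v - u|$ over the entire product $\mathcal{B}_V \times \mathcal{B}$, so that one $\alpha$ and one $h^\star$ serve uniformly for all admissible pairs.
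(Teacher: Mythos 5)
Your proposal is correct and follows essentially the same route as the paper: absorbing ball from items 1--2 of Assumption \ref{assumptions6396} via the energy estimate and Gronwall, norm equivalence of $V^{1/2}$ with $|\cdot|$, the inclusion ${\cal B}\subset{\cal B}_V$, the radius bookkeeping $|v-u|\le(1+\sqrt2)r=r'$ so that Lemma \ref{theorem3dvar} delivers the squeezing property (with the harmless swap of argument roles you note), and then a direct appeal to Theorem \ref{generaltheorem}. No gaps.
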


\subsubsection{Lorenz '63  Model \label{lorenz63}}

A first example of a system of the form (\ref{generalform})
is the Lorenz '63 model, which corresponds to a three dimensional problem
defined by (\ref{generalform}) with 
\[
A=\left[\begin{array}{ccc}
a & -a & 0\\
a & 1 & 0\\
0 & 0 & b
\end{array}\right],
\]
\[
B(u,\tilde{u})=\left[\begin{array}{c}
0\\
(u_{1}\tilde{u}_{3}+u_{3}\tilde{u}_{1})/2\\
-(u_{1}\tilde{u}_{2}+u_{2}\tilde{u}_{1})/2
\end{array}\right],\quad\quad f=\left[\begin{array}{c}
0\\
0\\
-b(r+a)
\end{array}\right].
\]
 The standard  parameter values are $(a,b,r)=(10,8/3,28).$
Define the projection matrix 
\[
P:=\left[\begin{array}{ccc}
1 & 0 & 0\\
0 & 0 & 0\\
0 & 0 & 0
\end{array}\right].
\]
It is then immediate from the definitions that the first, second and fourth items of Assumption \ref{assumptions6396} are satisfied \cite{hayden2011discrete}. A verification of the third and fifth items can be found in the proof of Theorem 2.5 of \cite{hayden2011discrete}.

To provide insight, in Table 1 we show a Monte Carlo estimate of the mean square error (MSE) made by a truncated nonlinear observer with different values of the observation noise strength $\epsilon.$ The results suggest that the MSE of this suboptimal filter decreases as ${\cal O}(\epsilon^2),$ in agreement with our theoretical analyses. This provides an upper bound for the error made by the optimal filter.

\begin{remark}An accuracy result for the Lorenz '63 model, similar to Theorem \ref{theorem6396} above,
was established in \cite{law2012analysis} using the 3DVAR algorithm.
Indeed truncation is here not needed since a global form
of the squeezing property Assumption \ref{Assumption 1}.2
holds (with $v\in {\cal B},\, u\in{\cal H}$). 
\end{remark}

\begin{table}\label{table1}
\begin{center}
\begin{tabular}{|l|l|}
\hline
$\epsilon$ & MSE \\ \hline
 $1$ & $1.59$\\ \hline
  $0.1$ & $1.3\times 10^{-2}$\\ \hline
  $ 0.01$ & $4.93\times 10^{-4}$\\ \hline
\end{tabular}
\end{center}
\caption{\em Bounds in the MSE given by the truncated nonlinear observer for the Lorenz '63 model. Only the first coordinate is observed. The assimilation time step is $h=0.01$ and the signal was filtered up to time $T=5.$ The MSE was computed using $20$ initializations of $v_0\sim N(0,I);$  for each of these initializations five observation sequences were generated using Gaussian noise. The MSE shown is the Monte Carlo average of the filter error at time $T$ over all these simulations.}
\end{table}

\subsubsection{Lorenz '96 Model \label{lorenz96}}

\label{Lorenz96} Another system that satisfies the assumptions
introduced in this section is the Lorenz '96 model, which is of the
form (\ref{generalform}) with the choices $A=I_{d\times d},$ where
we assume $d\in3\mathbb{N},$ forcing term 
\[
f=\left[\begin{array}{c}
8\\
\vdots\\
8
\end{array}\right],
\]
and bilinear form 
\[
B(u,\tilde{u})=-\frac{1}{2}\left[\begin{array}{c}
\tilde{u}_{2}u_{d}+u_{2}\tilde{u}_{d}-\tilde{u}_{d}u_{d-1}-u_{d}\tilde{u}_{d-1}\\
\vdots\\
\tilde{u}_{i-1}u_{i+1}+u_{i-1}\tilde{u}_{i+1}-\tilde{u}_{i-2}u_{i-1}-u_{i-2}\tilde{u}_{i-1}\\
\vdots\\
\tilde{u}_{d-1}u_{1}+u_{d-1}\tilde{u}_{1}-\tilde{u}_{d-2}u_{d-1}-u_{d-2}\tilde{u}_{d-1}
\end{array}\right].
\]
Define the projection matrix $P$ by replacing every third column
of the identity matrix $I_{d\times d}$ by the zero column vector
\begin{equation}\label{pforlorenz96}
P=\begin{bmatrix}e_{1}, & e_{2}, & 0, & e_{4}, & e_{5}, & 0, & \cdots\end{bmatrix}.
\end{equation}

For a proof that the first, second and fourth items of  Assumption \ref{assumptions6396} are satisfied see Property 2.1.1 in \cite{sanz}. The third item results from combining 
Property 2.1.1 and Property 2.2.2 in \cite{sanz}. Finally, since $A=I$ the fifth item holds with $c_3=1, c_4=0.$

 As for the Lorenz '63 model, we show a Monte Carlo estimate of the error made by a truncated nonlinear observer in Table 2. Again the error decreases as $\epsilon^2.$

\begin{table}\label{table2}
\begin{center}
\begin{tabular}{|l|l|}
\hline
$\epsilon$ & MSE \\ \hline
 $1$ & $1.11$\\ \hline
  $0.1$ & $1.08\times 10^{-2}$\\ \hline
  $ 0.01$ & $3.36\times 10^{-4}$\\ \hline
\end{tabular}
\end{center}
\caption{\em Same experiment as in Table $1,$ now for the Lorenz '96 model with the observation operator \eqref{pforlorenz96}. }
\end{table}

\subsection{Infinite Dimensions (Navier-Stokes Equation)}\label{infinitedimexample}

It is well known \cite{blomker2012accuracy} that the 
incompressible Navier-Stokes
equation on the torus $\mathbb{T}^{2}=[0,l]\times[0,l]$ can be written
in the form (\ref{generalform}) as we now recall.

Let $\hat{H}$ be the space of zero-mean, divergence-free, vector-valued
polynomials $u$ from $\mathbb{T}^{2}$ to $\mathbb{R}^{2}.$ Let
$H$ be the closure of $\hat{H}$ with respect to the $L^{2}$ norm.
Finally, let $P_{H}:(L^{2}(\mathbb{T}^{2}))^{2}\to H$ be the Leray-Helmholtz
orthogonal projector. Then, the operator $A$ and the symmetric bilinear
form $B$ in (\ref{generalform}) are given by 
\[
Au=-\nu P_{H}\Delta,\quad\quad B(u,v)=\frac{1}{2}P_{H}[u\cdot\nabla v]+\frac{1}{2}P_{H}[v\cdot\nabla u],
\]
were $\nu$ is the viscosity. 

We assume that $f\in H$ so that $P_{H}f=f.$ In the periodic case
considered here $A=-\nu\Delta$ with domain ${\cal D}(A)=H^{2}(\mathbb{T}^{2})\cap H.$
Moreover, the solution to the Navier-Stokes
equation (see below for the precise definition) can be written as a Fourier series
\[
v=\sum_{k\in{\cal K}}v_{k}e^{ikx},\quad{\cal K}=\left\{ \frac{2\pi}{L}(n_{1},n_{2}):n_{i}\in\mathbb{Z},(n_{1},n_{2})\neq(0,0)\right\} .
\]
The Fourier coefficients encode the divergence-free property and hence may be written as
$v_k=v_k' k^{\perp}/|k|$ for scalar coefficients $v_k'$.
We now define the observation operator $P=P_{\lambda}$ in the general
observation model (\ref{eq:observationmodel}) as 
\[
P_{\lambda}u=\sum_{k^{2}\le\lambda}u_{k}e^{ikx},
\]
and set $Q_{\lambda}=I-P_{\lambda}.$ Several choices of noise fit
into our theory, and a natural one is given by 
\begin{equation}
w_{1}=\sum_{k^{2}\le\lambda}\xi_{k}e^{ikx},\label{eq:noiseinnsedefinition}
\end{equation}
where $\xi_{k}\sim N\Bigl(0,\bigl(k^{2}n(\lambda)\bigr)^{-1}\Bigr)$ and $n(\lambda):=\#\{k:\, k^{2}\le\lambda\}.$

We let ${\cal H}$ be the closure of $\hat{H}$ with respect to the
$H^{1}$ norm, and define a norm in ${\cal H}$ 
\[
\|u\|_{H^{1}}^{2}:=\sum_{k\in{\cal K}}k^{2}|u_{k}|^{2},
\]
which is equivalent to the $H^{1}$ norm. Note that with this definition
$\mathbb{E}\|w_{1}\|_{H^{1}}^{2}=1.$ 

The following theorem ---see \cite{temam1995navier}, \cite{constantin1988chicago}
or \cite{robinson2001infinite}--- guarantees the existence and uniqueness
of strong solutions to this problem with initial conditions in ${\cal H}.$
\begin{proposition}\label{3dvarnsetheorem} Let $u_{0}\in{\cal H}$ and
$f\in H$. Then (\ref{generalform}) has a unique strong solution
\[
u\in L^{\infty}\bigl((0,T);{\cal H}\bigr)\cap L^{2}\bigl((0,T);{\cal D}(A)\bigr)\quad\text{and}\quad\frac{du}{dt}\in L^{2}\bigl((0,T),H\bigr)
\]
for any $T>0.$ Furthermore, this solution is in $C([0,T];{\cal H})$
and depends continuously on the initial data $u_{0}$ in the ${\cal H}$
norm. \end{proposition}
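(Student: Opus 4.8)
The plan is to follow the classical Galerkin-plus-compactness method, exploiting the two-dimensional structure to obtain strong-solution regularity. Throughout write $|\cdot|$ for the $H$-norm ($L^2$) and $\|\cdot\|_{\cal H}$ for the norm on ${\cal H}$ ($H^1$), and recall ${\cal D}(A)=H^2\cap H$. Let $\{e_k\}_{k\ge 1}$ be the orthonormal eigenbasis of the Stokes operator $A$ in $H$, let $\Pi_n$ denote orthogonal projection onto $\mathrm{span}\{e_1,\dots,e_n\}$, and seek $u_n(t)=\sum_{k\le n}c_k(t)e_k$ solving the finite-dimensional system $\frac{du_n}{dt}+Au_n+\Pi_nB(u_n,u_n)=\Pi_nf$ with $u_n(0)=\Pi_nu_0$. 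Since $B$ restricts to a smooth quadratic map on the range of $\Pi_n$, the Cauchy--Lipschitz theorem gives a local-in-time solution; the a priori bounds below rule out blow-up and extend it to $[0,T]$.

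First I would derive the basic energy estimate by pairing the Galerkin equation with $u_n$ in $H$. The energy-conserving property $\langle B(u,u),u\rangle=0$ of the symmetric convective nonlinearity kills the nonlinear contribution, and together with coercivity of $A$ and Young's inequality applied to $\langle f,u_n\rangle$ this yields a bound for $u_n$ in $L^\infty(0,T;H)\cap L^2(0,T;{\cal H})$ uniform in $n$. Next --- and this is where dimension two is essential --- I would pair the equation with $Au_n$ in $H$ to control the enstrophy. This produces the trilinear term $\langle B(u_n,u_n),Au_n\rangle$, which in two dimensions is estimated via the Ladyzhenskaya/Agmon interpolation inequalities as, schematically, $|\langle B(u,u),Au\rangle|\le c\,|u|^{1/2}\|u\|_{\cal H}\,|Au|^{3/2}$, so that Young's inequality absorbs a $\tfrac{\nu}{2}|Au|^2$ into the viscous dissipation and leaves a term of the form $c\,|u_n|^2\|u_n\|_{\cal H}^4$. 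Since $\|u_n\|_{\cal H}^2$ is already integrable in time by the energy estimate, Gronwall's lemma yields a uniform bound for $u_n$ in $L^\infty(0,T;{\cal H})\cap L^2(0,T;{\cal D}(A))$. Rearranging the equation and estimating the nonlinear term in $H$ using the bounds just obtained then controls $\frac{du_n}{dt}$ in $L^2(0,T;H)$.

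With these bounds in hand the passage to the limit is standard: weak-$*$ and weak limits give a candidate $u$ in the stated spaces, while the Aubin--Lions lemma (using the bound on $\frac{du_n}{dt}$) provides strong convergence of $u_n$ in $L^2(0,T;{\cal H})$, which is exactly what is needed to pass to the limit in the quadratic term $B(u_n,u_n)$. For uniqueness I would take two solutions $u,\tilde u$, set $w=u-\tilde u$, and test the difference equation $\frac{dw}{dt}+Aw+B(u,u)-B(\tilde u,\tilde u)=0$ with $w$ in $H$; writing $B(u,u)-B(\tilde u,\tilde u)=B(w,u)+B(\tilde u,w)$ and using the two-dimensional trilinear bound gives a differential inequality $\frac{d}{dt}|w|^2\le g(t)|w|^2$ with $g\in L^1(0,T)$, whence $w\equiv 0$ by Gronwall. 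The same computation, retaining the initial datum, yields Lipschitz dependence of $u(t)$ on $u_0$ in $H$; upgrading this to continuous dependence in the ${\cal H}$ norm is obtained by testing the difference equation with $Aw$ and running an analogous Gronwall argument. Finally, $u\in L^2(0,T;{\cal D}(A))$ together with $\frac{du}{dt}\in L^2(0,T;H)$ places $u$ in the Lions--Magenes interpolation class, giving $u\in C([0,T];{\cal H})$.

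The main obstacle is the enstrophy estimate: everything hinges on the two-dimensional trilinear inequality for $\langle B(u,u),Au\rangle$ that lets the top-order term $|Au|^2$ be absorbed by the dissipation. This is precisely the ingredient that fails in three dimensions, and it is what converts the a priori bounds into the global-in-time strong-solution regularity asserted here.
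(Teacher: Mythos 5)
The paper does not prove this proposition at all---it simply cites \cite{temam1995navier}, \cite{constantin1988chicago} and \cite{robinson2001infinite}---and your Galerkin approximation, energy and enstrophy estimates, Aubin--Lions compactness, Gronwall uniqueness, and Lions--Magenes interpolation for continuity in ${\cal H}$ constitute precisely the classical argument those references contain; the sketch is correct. (A minor simplification available here: on the two-dimensional torus the trilinear term $\langle B(u,u),Au\rangle$ vanishes identically, so the enstrophy estimate closes even more easily than via the Ladyzhenskaya/Agmon interpolation you invoke, though your route has the advantage of working on general 2D domains.)
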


Take $|\cdot|=V(\cdot)^{1/2}=\|\cdot\|_{H^{\text{1 }}}.$ It is not difficult
to prove the absorbing ball property 
for the Navier-Stokes equation with initial conditions in ${\cal H}$
\cite{robinson2001infinite}. Indeed there is $\theta=\theta(\nu)>0$ such that, for every $u\in {\cal H},$ $|Au|^2\ge \theta |u|^2.$ Then Assumption \ref{Assumption 1}.1 is satisfied with $r_0=|f|^2\theta^2$ and $r_1=\theta.$ We hence set 
\begin{equation}\label{absballnse}
{\cal B}= \{u\in {\cal H}: |u|\le r:=\sqrt{2}\frac{|f|}{\theta} \}.
\end{equation}
The following squeezing property
is taken from \cite{brett2012accuracy}, which uses the analysis in \cite{hayden2011discrete}.

\begin{lemma}\label{Assumption nse}
For every $r'>0$ there are constants $\alpha=\alpha(r')\in(0,1)$ and
$\lambda_{\star}=\lambda_{\star}(r')>0$ with the property that, for
$\lambda>\lambda_{\star},$ there exists $h^{\star}=h^{\star}(r',\,\lambda)$
such that, for all $u,v\in B(r'):=\{x\in{\cal H}:\:V(x)^{1/2}\le r'\},$
and assimilation time $h<h^{\star},$ 
\[
V\Bigl(Q_{\lambda}\bigl(\Psi(v)-\Psi(u)\bigr)\Bigr)\le\alpha V(v-u).
\]
\end{lemma}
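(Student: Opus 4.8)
The plan is to prove the squeezing property by a Foias--Prodi type argument, exploiting the fact that viscous dissipation acts increasingly strongly on high Fourier modes, in direct parallel with the treatment of the finite-dimensional models in Lemmas \ref{theorem3dvar} and \ref{bound}. Write $\delta_0 = u-v$ and $\delta(t)=\Psi_t(u)-\Psi_t(v)$, and set $v(t)=\Psi_t(v)$. Subtracting the two copies of \eqref{generalform} and using the symmetry and bilinearity of $B$, the error solves
\[
\frac{d\delta}{dt}+A\delta+2B(v,\delta)+B(\delta,\delta)=0,
\]
exactly as in \eqref{errorinterval}. Since here $V(\cdot)^{1/2}=\|\cdot\|_{H^1}$, the goal is to bound $V\bigl(Q_\lambda\delta(h)\bigr)=\|Q_\lambda\delta(h)\|_{H^1}^2$ by $\alpha\|\delta_0\|_{H^1}^2$.

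First I would record two a priori facts that are uniform over $u,v\in B(r')$. By the absorbing ball property (Assumption \ref{Assumption 1}.1, verified for the Navier--Stokes equation in \eqref{absballnse}) together with Proposition \ref{3dvarnsetheorem}, both trajectories remain in a ball of radius $M=M(r')$ in ${\cal H}$ for all $t\ge0$. Using this uniform bound and the standard $2$D Navier--Stokes trilinear estimates, a Gronwall argument applied to the error equation (tested in the $H^1$ inner product) yields a growth bound $\|\delta(t)\|_{H^1}^2\le e^{Kt}\|\delta_0\|_{H^1}^2$ on $[0,h)$, with $K=K(r')$; this is the analogue of the bound $|\delta(t)|^2\le e^{kt}|\delta_0|^2$ used in Lemma \ref{bound}.

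The heart of the argument is the high-mode estimate. Writing $q=Q_\lambda\delta$ and applying $Q_\lambda$ (which commutes with $A$) to the error equation, then taking the $H^1$ inner product with $q$, gives
\[
\frac{1}{2}\frac{d}{dt}\|q\|_{H^1}^2+\langle Aq,q\rangle_{H^1}=-\bigl\langle Q_\lambda\bigl(2B(v,\delta)+B(\delta,\delta)\bigr),\,q\bigr\rangle_{H^1}.
\]
The dissipation term is coercive on the high modes: since the eigenvalues of $A=-\nu\Delta$ on the range of $Q_\lambda$ exceed $\nu\lambda$, one has $\langle Aq,q\rangle_{H^1}\ge\nu\lambda\|q\|_{H^1}^2$. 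Estimating the nonlinear term by $C(M)\|\delta\|_{H^1}\|q\|_{H^1}$ and absorbing one factor of $\|q\|_{H^1}$ with Young's inequality leads, once $\lambda$ is large enough that $\nu\lambda$ dominates the nonlinear coefficient, to a differential inequality of the form
\[
\frac{d}{dt}\|q\|_{H^1}^2+\nu\lambda\|q\|_{H^1}^2\le\frac{C(M)^2}{\nu\lambda}\,\|\delta\|_{H^1}^2.
\]
Inserting the growth bound and integrating from $0$ to $h$ with $\|q(0)\|_{H^1}^2=\|Q_\lambda\delta_0\|_{H^1}^2\le\|\delta_0\|_{H^1}^2$ gives
\[
\|Q_\lambda\delta(h)\|_{H^1}^2\le\Bigl(e^{-\nu\lambda h}+\frac{C(M)^2}{\nu\lambda}\cdot\frac{e^{Kh}-e^{-\nu\lambda h}}{\nu\lambda+K}\Bigr)\|\delta_0\|_{H^1}^2.
\]
To first order in $h$ the bracket equals $1-h\bigl(\nu\lambda-C(M)^2/(\nu\lambda)\bigr)+o(h)$, so choosing $\lambda_\star=\lambda_\star(r')$ with $\nu\lambda_\star>C(M)$ forces the bracket strictly below $1$ for every $\lambda>\lambda_\star$ and all sufficiently small $h<h^\star(r',\lambda)$; this defines the contraction constant $\alpha\in(0,1)$ and establishes the claim.

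I expect the main obstacle to be the nonlinear estimate $\bigl|\langle Q_\lambda(2B(v,\delta)+B(\delta,\delta)),q\rangle_{H^1}\bigr|$ in the $H^1$ norm, which for the $2$D Navier--Stokes equation is delicate: closing it cleanly requires Ladyzhenskaya/Agmon-type interpolation inequalities and careful use of the uniform ${\cal H}$-bound $M$, rather than of higher ($H^2$) norms that are not directly available on $[0,h)$. This is precisely the estimate carried out in \cite{brett2012accuracy}, following the analysis of \cite{hayden2011discrete}, from which the lemma is quoted.
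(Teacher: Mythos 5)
First, a point of comparison that matters here: the paper does not actually prove this lemma --- it is quoted from \cite{brett2012accuracy}, which builds on the analysis of \cite{hayden2011discrete} --- and you correctly identify this at the end of your proposal. Your overall architecture (error equation as in \eqref{errorinterval}, a Gronwall growth bound $\|\delta(t)\|_{H^1}^2\le e^{Kt}\|\delta_0\|_{H^1}^2$ uniform over $B(r')$, a spectral-gap/dissipation estimate on the high modes $q=Q_\lambda\delta$, and integration in time to produce $\alpha<1$ for $\lambda$ large and $h$ small) is indeed the Foias--Prodi squeezing strategy of those references, and it parallels the paper's own finite-dimensional Lemmas \ref{theorem3dvar} and \ref{bound}.

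However, judged as a standalone proof there is a genuine gap, and it sits exactly at the step you defer: the claimed bound
\[
\bigl|\langle Q_\lambda(2B(v,\delta)+B(\delta,\delta)),\,q\rangle_{H^1}\bigr|\le C(M)\,\|\delta\|_{H^1}\,\|q\|_{H^1}
\]
is false as stated. With the norm $\|u\|_{H^1}^2=\sum_k k^2|u_k|^2$, the pairing $\langle f,q\rangle_{H^1}$ equals $\langle f,-\Delta q\rangle_{L^2}$ (equivalently $\langle\nabla f,\nabla q\rangle_{L^2}$), so the trilinear terms carry two derivatives beyond what $\|\delta\|_{H^1}\|q\|_{H^1}$ controls; and since $q$ is supported on modes $k^2>\lambda$, one has $\|q\|_{H^2}\ge\lambda^{1/2}\|q\|_{H^1}$, so the excess derivatives become \emph{worse}, not better, as $\lambda$ grows --- they cannot be hidden in $C(M)$. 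With such a bound in hand the rest of your argument would be routine, which is a sign that the bound itself is where the content of the lemma lies. The actual proof must retain the full dissipation $\nu\|Aq\|_{L^2}^2$ on the left-hand side (not merely its spectral-gap consequence $\nu\lambda\|q\|_{H^1}^2$), estimate the trilinear terms against $\|Aq\|_{L^2}$ via Ladyzhenskaya/Agmon interpolation, and accept factors of $\|Av(t)\|_{L^2}$ along the true trajectory; these are not bounded pointwise on $(0,h)$ but are square-integrable in time by Proposition \ref{3dvarnsetheorem}, which is precisely how the Gronwall argument closes in \cite{hayden2011discrete} and \cite{brett2012accuracy}. (Your parenthetical that $H^2$ norms are ``not directly available'' is therefore slightly misdirected: they are available in the $L^2$-in-time sense, and the cited proofs do use them.) Only after the nonlinearity is absorbed is the remaining portion of the dissipation converted into the decay rate that beats the growth $e^{Kt}$ for $\lambda>\lambda_\star$ and $h<h^\star$.
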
 

The previous lemma yields Assumption \ref{Assumption 1}.2.  for sufficiently small assimilation time $h$ by choosing ${\cal B}_V = {\cal B}$ and $r'=2r.$
The next result is then a straightforward application of Theorem \ref{generaltheorem}.

\begin{theorem}
Take $|\cdot|$ and $V$ as above, and let $\{m_{j}\}_{j\ge0}$
be the sequence of ${\cal B}_V$-truncated nonlinear observers with ${\cal B}_V = {\cal B}$ given by \eqref{absballnse}. Then there are $h^{\star},\,\lambda_{\star}>0,$ such
that for all $h<h^{\star}$ and $\lambda>\lambda_{\star}$ Assumption
\ref{Assumption 1} is satisfied
and therefore there exists a constant $c>0,$ independent of the noise
strength $\epsilon,$ such that

\[
\limsup_{j\to\infty}\mathbb{E}|v_{j}-m_{j}|^{2}\le c\epsilon^{2},
\]
Consequently, 
\[
\limsup_{j\to\infty}\mathbb{E}|v_{j}-\widehat{v}_{j}|^{2}\le c\epsilon^{2}, \quad \quad
\limsup_{j\to\infty}{\rm Trace}\,\mathbb{E}\,{\rm var}[v_{j}\big|Y_{j}]\le c\epsilon^{2}.
\]

\end{theorem}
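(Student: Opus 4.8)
The plan is to verify that both parts of Assumption \ref{Assumption 1} hold in the Navier--Stokes setting and then to invoke Theorem \ref{generaltheorem} essentially verbatim; all the substantial analytic work has already been packaged into the cited results, so the argument is a matter of correctly matching hypotheses. First I would confirm the absorbing ball property, Assumption \ref{Assumption 1}.1, which concerns only the signal dynamics. As recalled immediately before the theorem, the coercivity bound $|Au|^2\ge\theta|u|^2$ on ${\cal H}$ together with the energy-conservation property of $B$ yields, after a Gronwall argument, the inequality \eqref{eq:psicontractsoutsideball} and hence the forward-invariant absorbing ball ${\cal B}$ of \eqref{absballnse}. Existence and uniqueness of the strong solution that defines $\Psi=\Psi_h$ is guaranteed by Proposition \ref{3dvarnsetheorem}, so this part requires no new input.

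Second I would verify the squeezing property, Assumption \ref{Assumption 1}.2, by taking $D=I$, so that $I-DP=I-P_\lambda=Q_\lambda$, and then reading off the estimate from Lemma \ref{Assumption nse}. Since here $|\cdot|=V(\cdot)^{1/2}=\|\cdot\|_{H^1}$, the norm $V(\cdot)^{1/2}$ is trivially a Hilbert norm equivalent to $|\cdot|$, and ${\cal B}=\{u:V(u)^{1/2}\le r\}$ is a closed convex ball in the Hilbert space ${\cal H}$; consequently the projection $P_{{\cal B}_V}$ appearing in \eqref{eq:truncated3dvardef} is well-defined and $m_j\in{\cal B}_V={\cal B}$ for all $j$. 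I would then apply Lemma \ref{Assumption nse} with $r'=2r$, which furnishes $\lambda_\star(r')$ and, for each $\lambda>\lambda_\star$, a threshold $h^\star(r',\lambda)$ below which the contraction holds on $B(r')=\{x:V(x)^{1/2}\le 2r\}$. The point to check is that this covers the asymmetric configuration demanded by Assumption \ref{Assumption 1}.2: for $u\in{\cal B}$ and $v\in{\cal B}_V={\cal B}$ one has $V^{1/2}(u),V^{1/2}(v)\le r\le 2r$, so both lie in $B(r')$, and the lemma gives $V\bigl(Q_\lambda(\Psi(v)-\Psi(u))\bigr)\le\alpha V(v-u)$, which is exactly the squeezing inequality with $D=I$. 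The inclusion ${\cal B}\subseteq{\cal B}_V$ required in Assumption \ref{Assumption 1}.2 holds here with equality.

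With Assumption \ref{Assumption 1} established for all $h<h^\star$ and $\lambda>\lambda_\star$, and with the remaining hypotheses of Theorem \ref{generaltheorem} in force ($\mathbb{E}|v_0|^2<\infty$ by the standing assumption of Section \ref{setupsection}, $\mathbb{E}\|w_1\|_{H^1}^2=1$ by the normalization built into the noise \eqref{eq:noiseinnsedefinition}, and $D=I$ bounded), Theorem \ref{generaltheorem} applies directly and yields $\limsup_{j\to\infty}\mathbb{E}|v_j-m_j|^2\le c\epsilon^2$ with $c$ independent of $\epsilon$. The two stated consequences are then immediate: because $m_j$ is $Y_j$-measurable (by induction on the recursion \eqref{eq:truncated3dvardef}), the mean-square optimality \eqref{eq:optimalityproperty} of $\widehat v_j$ gives $\mathbb{E}|v_j-\widehat v_j|^2\le\mathbb{E}|v_j-m_j|^2$, and the variance bound follows from the trace identity ${\rm Trace}\,\mathbb{E}\,{\rm var}[v_j|Y_j]=\mathbb{E}|v_j-\widehat v_j|^2$ recorded in Section \ref{setupsection}.

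The only genuine subtlety, and therefore the point where I would be most careful rather than where I expect real difficulty, is the bookkeeping of radii and the order of quantifiers: Lemma \ref{Assumption nse} is phrased for a single ball $B(r')$ in the $V^{1/2}$ norm, whereas Assumption \ref{Assumption 1}.2 asks for $u\in{\cal B}$, $v\in{\cal B}_V$, so one must fix $r'$ large enough to contain both balls before extracting $\lambda_\star$ and $h^\star$ (the choice $r'=2r$ comfortably suffices and keeps $\alpha,\lambda_\star,h^\star$ independent of $\epsilon$). Since the deep analytic inputs---the strong-solution theory, the $H^1$ absorbing ball, and the squeezing estimate---are all available from Proposition \ref{3dvarnsetheorem} and Lemma \ref{Assumption nse}, there is no substantial obstacle beyond this matching of domains and the identification $I-DP=Q_\lambda$ obtained by setting $D=I$.
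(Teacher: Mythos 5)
Your proposal is correct and follows essentially the same route as the paper: the absorbing ball property is the one recalled just before the theorem, the squeezing property is obtained from Lemma \ref{Assumption nse} with $D=I$, ${\cal B}_V={\cal B}$ and $r'=2r$, and the conclusion then follows by direct application of Theorem \ref{generaltheorem} together with the $L^2$-optimality of $\widehat v_j$ and the trace identity. Your extra care about the radius bookkeeping and the $Y_j$-measurability of $m_j$ is consistent with, and slightly more explicit than, what the paper records.
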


\section{Conclusions}\label{conclusions}

We conclude by summarizing our work and highlighting future directions.
\begin{itemize}
\item Noisy observations can be used to compensate, in the long-time asymptotic
regime, for uncertainty in the initial conditions of unstable or chaotic
dynamical systems.
\item We have determined conditions on the dynamics and observations under
which the optimal filter accurately tracks the signal (and the variance
of the filtering distributions becomes small) in the long-time asymptotic. 
\item These properties of the true filtering distribution are potentially
useful for the design of improved algorithmic approximations of the filtering
distributions.
\item We have introduced a modification of the 3DVAR filter as a tool to
prove our results. This new filter is potentially of interest in its
own right as a practical algorithm.
\end{itemize}

\bigskip

\paragraph{Acknowledgments}

The authors are thankful to Alex Beskos, Dan Crisan, Ajay Jasra and Ramon van Handel 
for inspiring discussions.

\bibliography{references}
{} \bibliographystyle{siam.bst} 
\end{document}